\newtheorem {lemma}{Lemma}[section]
\newtheorem {theorem} {Theorem}[section]
\begin{document}

\title{\bf Distribution of signless Laplacian eigenvalues and graph invariants}

\author{Leyou Xu\footnote{E-mail: leyouxu@m.scnu.edu.cn},
Bo Zhou\footnote{E-mail: zhoubo@m.scnu.edu.cn}\\
School of Mathematical Sciences, South China Normal University\\
Guangzhou 510631, P.R. China}

\date{}
\maketitle

\begin{abstract}
For a simple graph on $n$ vertices, any of its signless Laplacian eigenvalues is in the interval $[0, 2n-2]$. In this paper, we give relationships between the number of signless Laplacian eigenvalues in specific intervals in $[0, 2n-2]$ and graph invariants including matching number and diameter.\\ \\
{\it MSC:} 05C50, 15A18 \\ \\
{\it Keywords:} signless Laplacian eigenvalue, matching number, diameter
\end{abstract}

\section{Introduction}

All graphs considered in this paper are simple and finite.
Let $G$ be a graph with vertex set $V(G)$ and edge set $E(G)$.
For $v\in V(G)$, denote by $N_G(v)$ the neighborhood of $v$ in $G$, and  $\delta_G(v)$ denotes the degree of $v$ in $G$.  Denote by $\delta(G)$ the minimum degree of $G$.
For an $n$-vertex graph $G$, the signless Laplacian matrix of  $G$  is the $n\times n$ matrix
$Q(G)=(q_{uv})_{u,v\in V(G)}$, where
\[
q_{uv}=\begin{cases}
\delta_G(u) & \mbox{if $u=v$,}\\
1     & \mbox{if $u\ne v$ and $uv\in E(G)$,}\\
0     & \mbox{if $u\ne v$ and $uv\not\in E(G)$}.
\end{cases}
\]
That is, $Q(G)$ is equal to the sum of the  diagonal
degree matrix and the adjacency matrix of $G$.
The  eigenvalues of $Q(G)$ are known as the signless Laplacian eigenvalues of $G$, which we denote by $q_1(G), \dots, q_n(G)$, arranged in nonincreasing order. That is,
$q_j(G)$ is the $j$-th (largest) signless Laplacian eigenvalue  of $G$ for $j=1,\dots, n$.
It is known that $Q(G)$ is a positive semidefinite matrix, so $q_n(G)\ge 0$. By Gershgorin's circle theorem,
$q_1(G)\le 2n-2$.  It  is interesting to know how the signless Laplacian eigenvalues are distributed in $[0,2n-2]$. To have a fuller understanding of the  distribution of the signless Laplacian eigenvalues of  a graph of a given order,  Ghodrati and Hosseinzadeh \cite{GH} proposed to investigate  the number of signless Laplacian eigenvalues of an $n$-vertex graph in some subintervals of $[0, 2n-2]$. To this end, denote by $m_GI$ for a subinterval $I$ of $[0,2n-2]$ the number of signless Laplacian eigenvalues of  an $n$-vertex graph $G$ that fall inside $I$ (counting multiplicities), which,  for specific $I$,  will be
related to structural properties of $G$.
Wang and Belardo \cite{WB2}  determined all connected graphs with at most two signless Laplacian eigenvalues exceeding two.
Lin and Zhou \cite{LZ} determined all connected graphs with at most one signless Laplacian eigenvalue exceeding three. Some results on the distribution of the signless Laplacian eigenvalues are in fact known
from the bounds on the signless Laplacian eigenvalues. For example,
if $G$ is a graph on $n\ge 2$ vertices, then $q_2(G)\le n-2$ (which follows from Lemma \ref{addedges} below, see also \cite{LN,WBHB}), so $m_G[0,n-2]\ge n-1$, and  moreover, if $G$ is not complete, then $q_2(G)\ge \delta(G)$  (see \cite{LN}), so $m_G[0, \delta(G))\le n-2$.
Ghodrati and Hosseinzadeh \cite{GH} established some relationship between
the number of signless Laplacian eigenvalues in certain subintervals of $[0,2n-2]$ and graph parameters including independence, clique, chromatic,  edge covering and matching numbers.

The matching number of a graph $G$ is the size of a maximum matching of $G$, and it is denoted by $\nu(G)$.
For a connected graph $G$ with $u,v\in V(G)$,
the distance between $u$ and $v$ in $G$ is the length of a
shortest path connecting them in $G$.  The diameter of  $G$ is the greatest distance between vertices in $G$.

In this paper, we show relationships between the number of signless Laplacian eigenvalues within specific intervals for $n$-vertex graphs and  matching number (diameter, respectively).

The Laplacian eigenvalues of a graph $G$ are the eigenvalues of the Laplacian matrix of $G$. The Laplacian spectrum  of any $n$-vertex graph is contained in $[0,n]$. The distribution of Laplacian eigenvalues received much attention, see, e.g. \cite{AADT, XZ1}. For bipartite graphs, the Laplacian spectrum and the signless Laplacian spectrum coincide. However, there is no direct connection between the
distribution of Laplacian eigenvalues and the distribution of signless Laplacian eigenvalues for non-bipartite graphs.

\section{Preliminaries}

For an $n\times n$ complex matrix $M$ whose all eigenvalues are real, $\rho_i(M)$ denotes its $i$-th largest eigenvalue for $i=1,\dots, n$, and $\sigma(M)=\{\rho_i(M): i=1,\dots,n \}$ is the spectrum of $M$.
For convenience, if $\rho$ is an eigenvalue of $M$ with multiplicity $s\ge 2$, then we write it as $\rho^{[s]}$ in $\sigma(M)$. For a graph $G$, let $\sigma_Q(G)=\sigma(Q(G))$.
Denote by $I_n$ the identity matrix of order $n$.

We need Weyl's inequalities (Lemma \ref{cw}) and Cauchy's interlacing theorem (Lemma \ref{interlacing}).

\begin{lemma} \cite[Theorem 1.3]{So} \label{cw}
Let $A$ and $B$ be Hermitian matrices of order $n$.
For $1\le i,j\le n$ with $i+j-1\le n$,
\[
\rho_{i+j-1}(A+B)\le \rho_i(A)+\rho_j(B).
 \]
\end{lemma}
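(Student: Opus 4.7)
The plan is to derive Weyl's inequality from the Courant--Fischer min--max characterization of the eigenvalues of a Hermitian matrix. Recall that for an $n \times n$ Hermitian matrix $M$ and each $1 \le k \le n$,
\[
\rho_k(M) = \min_{\dim S = n-k+1} \; \max_{0 \ne x \in S} \frac{x^{*} M x}{x^{*} x},
\]
and the outer minimum is attained on $S = \mathrm{span}(v_k, v_{k+1}, \dots, v_n)$, where $v_1, \dots, v_n$ is an orthonormal eigenbasis of $M$ with $M v_{\ell} = \rho_{\ell}(M) v_{\ell}$.

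First I would fix optimal witness subspaces. Let $U$ be an $(n-i+1)$-dimensional subspace with
\[
\rho_i(A) = \max_{0 \ne x \in U} \frac{x^{*} A x}{x^{*} x},
\]
and let $V$ be an $(n-j+1)$-dimensional subspace enjoying the analogous property with respect to $B$ and $j$. The existence of $U$ and $V$ is ensured by the attainment statement above.

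Next, by the subspace sum--intersection formula, $\dim(U \cap V) \ge \dim U + \dim V - n = n - (i+j-1) + 1$, which is at least $1$ since $i+j-1 \le n$. Choose any subspace $W \subseteq U \cap V$ of dimension exactly $n-(i+j-1)+1$; using $W$ as a test subspace in the Courant--Fischer formula for $A+B$ at index $i+j-1$ yields
\[
\rho_{i+j-1}(A+B) \le \max_{0 \ne x \in W} \frac{x^{*}(A+B)x}{x^{*} x}.
\]

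Finally, for every nonzero $x \in W \subseteq U \cap V$ the Rayleigh quotient splits as
\[
\frac{x^{*}(A+B)x}{x^{*} x} = \frac{x^{*} A x}{x^{*} x} + \frac{x^{*} B x}{x^{*} x} \le \rho_i(A) + \rho_j(B),
\]
which gives the desired inequality. The only delicate step is the dimension count for $U \cap V$; since it is immediate from the standard formula, no real obstacle arises, and the argument reduces to assembling the two Courant--Fischer witnesses on a common subspace.
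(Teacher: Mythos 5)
Your argument is correct: the Courant--Fischer characterization is stated accurately, the witness subspaces $U$ and $V$ exist by the attainment on spans of trailing eigenvectors, the dimension count $\dim(U\cap V)\ge n-(i+j-1)+1\ge 1$ is right, and splitting the Rayleigh quotient of $A+B$ on $U\cap V$ closes the inequality. Note, however, that the paper offers no proof to compare against --- it quotes this as Weyl's inequality with a citation to \cite[Theorem 1.3]{So} --- so what you have written is the standard textbook derivation of that cited result, and it is sound.
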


\begin{lemma}\label{interlacing}\cite[Theorem 4.3.28]{HJ}
If $M$ is  a Hermitian matrix of order $n$ and $B$ is one of its principal submatrices of order $p$, then $\rho_{n-p+i}(M)\le\rho_i(B)\le \rho_{i}(M)$ for $i=1,\dots,p$.
\end{lemma}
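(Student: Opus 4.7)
The plan is to derive both inequalities from the Courant--Fischer min--max characterization. Recall that for any Hermitian matrix $A$ of order $n$ and any $1\le k\le n$,
\[
\rho_k(A)=\max_{\dim S=k}\ \min_{0\ne x\in S}\frac{x^{*}Ax}{x^{*}x}
=\min_{\dim T=n-k+1}\ \max_{0\ne x\in T}\frac{x^{*}Ax}{x^{*}x}.
\]
Since a simultaneous permutation of rows and columns leaves the spectrum of $M$ unchanged, I may assume without loss of generality that $B$ occupies the top-left $p\times p$ block of $M$. Let $P\colon\mathbb{C}^{p}\to\mathbb{C}^{n}$ be the isometric embedding sending $y$ to the vector obtained by appending $n-p$ zero coordinates. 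Then $\|Py\|=\|y\|$ and $(Py)^{*}M(Py)=y^{*}By$ for every $y\in\mathbb{C}^{p}$, so Rayleigh quotients are preserved under $P$.

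For the upper estimate $\rho_i(B)\le\rho_i(M)$, take an $i$-dimensional subspace $S\subseteq\mathbb{C}^{p}$ achieving the first (max--min) form of Courant--Fischer for $B$, and set $\widetilde S=P(S)$, an $i$-dimensional subspace of $\mathbb{C}^{n}$; the minimum of the Rayleigh quotient of $M$ over $\widetilde S$ equals $\rho_i(B)$, so by the max--min form for $M$ we get $\rho_i(M)\ge\rho_i(B)$.

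For the lower estimate $\rho_{n-p+i}(M)\le\rho_i(B)$, use the dual (min--max) form. With $i'=n-p+i$, one has $n-i'+1=p-i+1$, so $\rho_{i'}(M)$ is the min over $(p-i+1)$-dimensional subspaces of $\mathbb{C}^{n}$ of the maximum Rayleigh quotient. Now take $U\subseteq\mathbb{C}^{p}$ of dimension $p-i+1$ realizing the analogous min--max expression for $\rho_i(B)$; then $P(U)$ has dimension $p-i+1$ in $\mathbb{C}^{n}$ and the maximum of the Rayleigh quotient of $M$ over $P(U)$ coincides with that of $B$ over $U$, namely $\rho_i(B)$. Minimizing over admissible $(p-i+1)$-dimensional subspaces yields $\rho_{n-p+i}(M)\le\rho_i(B)$.

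The only subtle point is bookkeeping the dimensional indices so that the two forms of Courant--Fischer match up correctly ($i$-dim for the upper inequality, $(p-i+1)$-dim for the lower one); once that is set up, the preservation of Rayleigh quotients under the embedding $P$ makes both estimates immediate, and no further technical obstacle arises.
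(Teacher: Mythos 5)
Your proof is correct: the two applications of Courant--Fischer (max--min over $i$-dimensional subspaces for the upper bound, min--max over $(p-i+1)$-dimensional subspaces for the lower bound, both transported through the isometric zero-padding embedding $P$) are set up with the right index bookkeeping, and the attainment of the extremal subspaces for $B$ is unproblematic. Note, however, that the paper offers no proof of this lemma at all --- it is quoted verbatim as Theorem 4.3.28 of Horn and Johnson --- so there is nothing to compare against; what you have written is essentially the standard textbook proof of Cauchy's interlacing theorem, and it is a valid self-contained justification of the cited result.
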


For a graph $G$ with $\emptyset\ne S\subseteq V(G)$, denote by $G[S]$ the subgraph of $G$ induced by $S$ and let $G-S=G[V(G)\setminus S]$ if $S\ne V(G)$.

For a graph $G$ with $S\subseteq E(G)$, denote by $G-S$ the subgraph of $G$ obtained from $G$ by deleting all edges in $S$.
Particularly, if $S=\{e\}$, then we write it as $G-e$.
If $G'=G-S$ for some $S\subseteq E(G)$, then $G=G'+S$.

Applying Lemma \ref{cw},  interlacing theorems for signless Laplacian eigenvalues of a graph when an edge or a vertex is deleted have been established, which are crucial for our main results.

\begin{lemma}\label{addedges}\cite{CRS}
If $G$ is an $n$-vertex graph with $e\in E(G)$, then
\[
q_1(G)\ge q_1(G-e)\ge q_2(G)\ge \dots\ge q_n(G)\ge  q_{n}(G-e).
\]
\end{lemma}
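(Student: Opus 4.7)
The plan is to realize $Q(G)$ as a rank-one perturbation of $Q(G-e)$ and then apply Weyl's inequality (Lemma \ref{cw}) twice, in complementary directions, to obtain the upper and lower halves of the interlacing chain.

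First I would identify the perturbation. Writing $e=uv$, the matrix $Q(G)-Q(G-e)$ has entries equal to $1$ at the four positions $(u,u), (v,v), (u,v), (v,u)$ and $0$ elsewhere. Letting $\mathbf{e}_u, \mathbf{e}_v$ denote the standard basis vectors in $\mathbb{R}^n$ and setting $x=\mathbf{e}_u+\mathbf{e}_v$, this difference equals $xx^T$, a rank-one positive semidefinite matrix. Its eigenvalues are therefore $2,0,\dots,0$, so in particular $\rho_j(xx^T)=0$ for all $j\ge 2$, and consequently $\rho_1(-xx^T)=0$ as well.

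Next I would set $R=xx^T$ and write both $Q(G)=Q(G-e)+R$ and $Q(G-e)=Q(G)+(-R)$. Applying Lemma \ref{cw} to the first decomposition with $j=2$ yields, for $i=1,\dots,n-1$,
\[
q_{i+1}(G)=\rho_{i+1}(Q(G-e)+R)\le \rho_i(Q(G-e))+\rho_2(R)=q_i(G-e),
\]
while applying Lemma \ref{cw} to the second decomposition with $j=1$ yields, for $i=1,\dots,n$,
\[
q_i(G-e)=\rho_i(Q(G)+(-R))\le \rho_i(Q(G))+\rho_1(-R)=q_i(G).
\]
Concatenating these two families interlaces the eigenvalues of $Q(G)$ and $Q(G-e)$ as
\[
q_1(G)\ge q_1(G-e)\ge q_2(G)\ge q_2(G-e)\ge \dots\ge q_n(G)\ge q_n(G-e),
\]
which is the claim.

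There is no serious obstacle here: once the identification $Q(G)-Q(G-e)=(\mathbf{e}_u+\mathbf{e}_v)(\mathbf{e}_u+\mathbf{e}_v)^T$ is in hand, the statement reduces to two routine invocations of Weyl. The only point needing a sliver of care is the choice of the shift index $j$ in Lemma \ref{cw} in each direction, which is dictated by the facts $\rho_2(R)=0$ and $\rho_1(-R)=0$; no appeal to the stronger two-sided form of Weyl is required beyond what Lemma \ref{cw} already provides.
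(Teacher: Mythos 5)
Your proof is correct, and it follows exactly the route the paper indicates for this cited result: the paper states that Lemma \ref{addedges} is obtained by applying Weyl's inequality (Lemma \ref{cw}) to the edge-deletion perturbation, which is precisely your decomposition $Q(G)=Q(G-e)+(\mathbf{e}_u+\mathbf{e}_v)(\mathbf{e}_u+\mathbf{e}_v)^{\top}$ with the two invocations of Weyl at $j=2$ and $j=1$. The index bookkeeping and the eigenvalue facts $\rho_2(R)=0$ and $\rho_1(-R)=0$ are all handled correctly.
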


\begin{lemma}\label{deletevertices}\cite{WB}
If $G$ is an $n$-vertex graph with $v\in V(G)$, then $q_{i+1}(G)\le q_i(G-v)+1$ for $i=1,\dots,n-1$.
\end{lemma}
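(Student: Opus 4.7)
The plan is to write $Q(G)$ as a sum of two Hermitian matrices whose spectra can be controlled, and then apply Weyl's inequality (Lemma \ref{cw}) with $j=2$. Ordering the vertices so that $v$ is last, I would decompose
\[
Q(G)=M_1+M_2,
\]
where $M_1$ is the $n\times n$ matrix obtained from $Q(G-v)$ by appending a zero row and a zero column at the position of $v$, and $M_2$ carries exactly the contributions of $v$ to $Q(G)$: the off-diagonal $1$'s between $v$ and its neighbors, the $+1$ added to the diagonal entry at each neighbor of $v$, and the diagonal entry $\delta_G(v)$ at $v$.

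The next step is to identify the spectra of the two summands. Since $Q(G-v)$ is positive semidefinite, the eigenvalues of $M_1$ are $q_1(G-v),\dots,q_{n-1}(G-v)$ together with one extra $0$, and in particular $\rho_i(M_1)=q_i(G-v)$ for $i=1,\dots,n-1$. The matrix $M_2$ has nonzero entries only in the rows and columns indexed by $v$ and $N_G(v)$, and on those indices it is precisely the signless Laplacian of the star $K_{1,\delta_G(v)}$ centered at $v$. Because the star is bipartite, its signless Laplacian spectrum equals its Laplacian spectrum $\{\delta_G(v)+1,\,1^{[\delta_G(v)-1]},\,0\}$. After padding with zero rows and columns for the non-neighbors of $v$, the spectrum of $M_2$ is $\{\delta_G(v)+1,\,1^{[\delta_G(v)-1]},\,0^{[n-\delta_G(v)]}\}$, so $\rho_2(M_2)\le 1$ regardless of $\delta_G(v)$ (with the understanding that the block of $1$'s is empty when $\delta_G(v)\in\{0,1\}$, in which case $\rho_2(M_2)=0$).

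Finally, Weyl's inequality applied to $Q(G)=M_1+M_2$ with $j=2$ gives
\[
\rho_{i+1}(M_1+M_2)\le\rho_i(M_1)+\rho_2(M_2),
\]
which is valid exactly when $i+1\le n$. Combining this with the spectral information above yields $q_{i+1}(G)\le q_i(G-v)+1$ for $i=1,\dots,n-1$, as claimed.

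The whole argument is short, and the only step requiring a bit of care is the identification of the spectrum of $M_2$, in particular the observation that $\rho_2(M_2)\le 1$; this is what produces the ``$+1$'' in the inequality and makes the Weyl-type bound tight enough to be useful.
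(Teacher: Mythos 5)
Your proof is correct, and it follows exactly the route the paper attributes to the cited reference [WB]: decompose $Q(G)$ into the zero-padded $Q(G-v)$ plus the signless Laplacian of the star at $v$, observe that the latter has second largest eigenvalue at most $1$, and apply Weyl's inequality (Lemma \ref{cw}) with $j=2$. The paper itself only cites this lemma rather than proving it, but your argument is the standard one and all steps, including the edge cases $\delta_G(v)\in\{0,1\}$, are handled correctly.
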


Let $G$ be a graph. Suppose that $V(G)$ is partitioned as $V_1\cup \dots\cup V_m$. For $1\le i<j\le m$, set $Q_{i,j}$ to be the submatrix of $Q(G)$ with rows corresponding to vertices in $V_i$ and columns corresponding to vertices in $V_j$. The quotient matrix of $Q(G)$ with respect to the partition $V_1\cup \dots \cup V_m$ is denoted by $B=(b_{ij})$, where $b_{ij}=\frac{1}{|V_i|}\sum_{u\in V_i}\sum_{v\in V_j}q_{uv}$. If $Q_{i,j}$ has constant row sum, then we say $B$ is an equitable quotient matrix (with respect to the above partition of $V(G)$).
The following lemma is an immediate consequence of \cite[Lemma 2.3.1]{BH}

\begin{lemma}\label{quo}
For a graph $G$, if $B$ is an equitable quotient matrix of $Q(G)$, then $\sigma(B)\subseteq \sigma_Q(G)$.
\end{lemma}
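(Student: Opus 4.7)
The plan is to exhibit an explicit $m$-dimensional subspace of $\mathbb{R}^n$ that is invariant under $Q(G)$ and on which $Q(G)$ acts as $B$. To this end, I would introduce the characteristic matrix of the partition, namely the $n\times m$ matrix $S$ with $S_{v,i}=1$ if $v\in V_i$ and $S_{v,i}=0$ otherwise.

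The first and key step would be to verify the intertwining identity $Q(G)S = SB$. Computing entrywise, the $(v,j)$-entry of $Q(G)S$ equals $\sum_{u\in V_j} q_{vu}$, which is the $v$-th row sum of the block $Q_{i,j}$ when $v\in V_i$; the equitable-partition hypothesis says exactly that this row sum is independent of the choice of $v\in V_i$, and by the averaging definition of $B$ it then equals $b_{ij}$. On the other hand, the $(v,j)$-entry of $SB$ is $b_{ij}$ whenever $v\in V_i$. So both sides agree, and the identity holds precisely because each $Q_{i,j}$ has constant row sums.

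Next, I would note that the columns of $S$ are the indicator vectors of the nonempty disjoint sets $V_1,\dots,V_m$, hence are pairwise orthogonal and nonzero, and in particular linearly independent. Consequently, if $x\in\mathbb{R}^m$ is an eigenvector of $B$ with $Bx=\lambda x$, then $Sx\neq 0$ and
\[
Q(G)(Sx) = SBx = \lambda\,Sx,
\]
so $\lambda\in\sigma_Q(G)$. Running over all eigenvalues of $B$ yields $\sigma(B)\subseteq\sigma_Q(G)$.

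I do not foresee any substantive obstacle here: the only content is the verification $Q(G)S=SB$, and this is forced by the equitable-partition hypothesis, after which the conclusion is pure linear algebra. This matches the paper's description of the statement as an immediate consequence of the standard fact recorded in \cite[Lemma 2.3.1]{BH}.
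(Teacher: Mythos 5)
Your proof is correct and is exactly the standard argument behind the fact the paper cites (the intertwining identity $Q(G)S=SB$ with the characteristic matrix $S$ of the partition, followed by lifting eigenvectors); the paper gives no independent proof, deferring to \cite[Lemma 2.3.1]{BH}, which proceeds the same way. No issues.
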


Let $G\cup H$ be the disjoint union of graphs $G$ and $H$. The disjoint union of $k$ copies of a graph $G$ is denoted by $kG$.
Denote by $P_n$, $C_n$ and $K_n$ the path, the cycle and the complete graph of order $n$, respectively.
Denote by $K_{2,n}$ the complete bipartite graph with two and $n$ vertices in its partite sets.

For integers $n$, $d$, $r$ and $a$ with $3\le d\le n-2$, $2\le r\le d-1$ and $1\le a\le n-d-2$,  let $G_{n,d,r,a}$ be the graph obtained from vertex disjoint union of $P_{d+1}:=v_1\dots v_{d+1}$ and  $K_{n-d-1}$ by adding all edges between  $a$ vertices in $K_{n-d-1}$ and vertices $v_{r-1}$, $v_r$ and $v_{r+1}$ and between the remaining $n-d-1-a$ vertices in $K_{n-d-1}$ and  vertices $v_r$, $v_{r+1}$ and $v_{r+2}$.

A diametral
path of a connected graph $G$ is a shortest path between vertices whose distance from each other is the
diameter of $G$.

\begin{lemma}\label{Cn}
(i) $\sigma_Q(C_n)=\left\{2+2\cos \frac{2j\pi}{n}:j=0,\dots,n-1 \right\}$.\\
(ii) $q_1(P_n)<4$.\\
(iii) For $n\ge 5$ and $e\in E(K_n)$, $q_{n-3}(K_n-e)=q_{n-2}(K_n-e)=n-2$.\\
(iv) $q_3(G_{6,3,2,1})=q_4(G_{6,3,2,1})=3$.\\
(v) For $n\ge 7$ and $1\le a\le n-5$, $q_{n-4}(G_{n,3,2,a})=q_{n-3}(G_{n,3,2,a})=q_{n-2}(G_{n,3,2,a})=n-3$ and $q_{n-1}(G_{n,3,2,a})<n-3$.
\end{lemma}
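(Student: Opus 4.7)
Parts (i)--(iii) follow from classical spectra combined with the edge-deletion interlacing of Lemma~\ref{addedges}. For (i), since $C_n$ is $2$-regular, $Q(C_n)=2I_n+A(C_n)$, and the spectrum of $A(C_n)$ is the circulant spectrum $\{2\cos(2j\pi/n):j=0,\dots,n-1\}$. For (ii), writing $P_n=C_n-e$, Lemma~\ref{addedges} gives $q_1(P_n)\le q_1(C_n)=4$; equality would force $q_2(C_n)\ge 4$, contradicting $q_2(C_n)=2+2\cos(2\pi/n)<4$ from (i). For (iii), $\sigma_Q(K_n)=\{2n-2,(n-2)^{[n-1]}\}$, and Lemma~\ref{addedges} traps $q_j(K_n-e)$ between $q_j(K_n)$ and $q_{j+1}(K_n)$, both equal to $n-2$ for $j=2,\dots,n-1$; the hypothesis $n\ge 5$ puts both indices $n-3$ and $n-2$ in this range.

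For (iv), the graph $G_{6,3,2,1}$ (the only reading of $G_{6,3,1,1}$ consistent with the definition's requirement $r\ge 2$) admits the involution $v_i\leftrightarrow v_{5-i}$, together with the swap of the two vertices of $K_{n-d-1}$. This splits $\mathbb{R}^6$ into a $3$-dimensional symmetric and a $3$-dimensional antisymmetric $Q$-invariant subspace. On each, $Q$ restricts to a $3\times 3$ matrix whose characteristic polynomial I would compute directly; both will have $3$ as a root, so $3$ is an eigenvalue of $Q(G_{6,3,2,1})$ with multiplicity $2$, appearing as $q_3=q_4$ once the full spectrum is sorted.

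For (v), the strategy is to pin down the multiplicity of $n-3$ and then locate the remaining four eigenvalues via successive equitable partitions. First, each of $S_1,S_2$ consists of pairwise adjacent vertices with identical closed neighborhoods and common degree $n-2$, so for $u,u'$ in the same set the vector $e_u-e_{u'}$ is a $Q$-eigenvector with eigenvalue $n-3$; this yields $(a-1)+(n-5-a)=n-6$ independent such vectors. Next, take the equitable partition $\{v_1\},\{v_2\},\{v_3\},\{v_4\},S_1,S_2$ with $6\times 6$ quotient $B$ (Lemma~\ref{quo}); direct inspection shows that in $B-(n-3)I$ the row indexed by $\{v_2\}$ equals the row indexed by $S_1$, and the row for $\{v_3\}$ equals that for $S_2$, so its kernel is at least $2$-dimensional. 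Lifting these null vectors produces two more $Q$-eigenvectors with eigenvalue $n-3$, orthogonal to the twin eigenvectors, so the multiplicity of $n-3$ in $\sigma_Q(G)$ is at least $n-4$. Finally, coarsen to the equitable partition $\{v_1\},\{v_2\}\cup S_1,\{v_3\}\cup S_2,\{v_4\}$ with $4\times 4$ quotient $B'$; its four eigenvalues are the remaining four eigenvalues of $Q(G)$.

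It remains to locate the eigenvalues of $B'$ relative to $n-3$. A direct determinant computation gives $\det(B'-(n-3)I)=(a+1)(n-3-a)(n-3)>0$, so $n-3\notin\sigma(B')$ and an even number of eigenvalues of $B'$ exceed $n-3$. Since $\operatorname{tr}(B')=4n-10>4(n-3)$, at least one---hence at least two---eigenvalues of $B'$ exceed $n-3$. Conversely, conjugating $B'$ by $\operatorname{diag}(1,\sqrt{a+1},\sqrt{n-3-a},1)$ symmetrizes it to a tridiagonal matrix $\tilde{B'}$ whose $2\times 2$ principal submatrix on the cells $\{v_1\},\{v_4\}$ equals $\operatorname{diag}(1+a,n-3-a)$, with both entries at most $n-4$; Cauchy's interlacing (Lemma~\ref{interlacing}) then forces $\rho_3(\tilde{B'}),\rho_4(\tilde{B'})\le n-4<n-3$, so at most two eigenvalues of $B'$ exceed $n-3$. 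Consequently exactly two lie above and two below $n-3$, and combined with the multiplicity $n-4$ of $n-3$ this yields $q_{n-4}=q_{n-3}=q_{n-2}=n-3$ and $q_{n-1}<n-3$. The main obstacle is this final sandwich: the determinant alone is inconclusive, and one must combine the trace lower bound with Cauchy interlacing on the symmetrized coarse quotient to rule out both ``all four above'' and ``all four below'' $n-3$ simultaneously, uniformly over $a\in[1,n-5]$.
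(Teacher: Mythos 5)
Your argument for part (ii) has a genuine gap. Lemma~\ref{addedges} applied to $C_n=P_n+e$ gives $4=q_1(C_n)\ge q_1(P_n)\ge q_2(C_n)$; if $q_1(P_n)$ were equal to $4$, the only consequence would be $4\ge q_2(C_n)$, which is no contradiction. The implication ``equality would force $q_2(C_n)\ge 4$'' does not follow from the interlacing in either direction, so as written you only obtain $q_1(P_n)\le 4$. The strict inequality needs a separate input: either note that $P_n$ is bipartite, so $\sigma_Q(P_n)$ equals the Laplacian spectrum $\{2-2\cos\frac{j\pi}{n}:j=0,\dots,n-1\}$, whose largest element is $2+2\cos\frac{\pi}{n}<4$ (essentially what the paper cites from Brouwer--Haemers), or observe that $Q(C_n)$ is nonnegative and irreducible and $Q(P_n)$ is obtained from it by decreasing some entries, so Perron--Frobenius gives $q_1(P_n)<q_1(C_n)=4$. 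For part (iv) you, like the paper, defer the actual computation; your symmetric/antisymmetric splitting is a sensible way to organize it (and reading $G_{6,3,1,1}$ as $G_{6,3,2,1}$ is the correct interpretation of what is evidently a typo), but ``both will have $3$ as a root'' remains a promissory note.

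Parts (i) and (iii) and (v) are correct, and (iii) and (v) take routes genuinely different from the paper's. For (iii) the paper computes the full spectrum of $K_n-e$ (a rank argument for $(n-2)I_n-Q(K_n-e)$ plus an equitable quotient), whereas you squeeze $q_j(K_n-e)$ between $q_j(K_n)=q_{j+1}(K_n)=n-2$ for $2\le j\le n-1$ using only Lemma~\ref{addedges}; this is shorter and suffices for the stated claim. For (v) the paper fixes the multiplicity of $n-3$ by counting equal rows, passes to the same $4\times 4$ quotient $M=B'$, and locates its roots by evaluating the characteristic polynomial at $n-2$, $n-3$ and $a+1$ --- which forces a separate case $a=n-4-a$, since $g(n-2)$ vanishes there. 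Your combination of $\det(B'-(n-3)I)=(a+1)(n-3-a)(n-3)>0$ (even number of eigenvalues above $n-3$), $\operatorname{tr}(B')=4n-10>4(n-3)$ (at least one above), and Cauchy interlacing on the symmetrized tridiagonal quotient (at most two above, since $\rho_3,\rho_4\le\max\{a+1,n-3-a\}\le n-4$) is uniform in $a$ and avoids that case split; I checked the determinant, the trace, and the interlacing step, and all are correct. The one point worth spelling out is why $\sigma_Q(G)$ is, as a multiset, $\sigma(B')$ together with $(n-3)^{[n-4]}$: with $V_1$ the span of the part indicators and $V_2=V_1^{\perp}$, both $Q$-invariant and $Q|_{V_1}$ similar to $B'$, the fact that $n-3\notin\sigma(B')$ places the whole $(n-3)$-eigenspace inside $V_2$, and its dimension being at least $n-4=\dim V_2$ forces $Q|_{V_2}=(n-3)I$.
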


\begin{proof} From \cite[pp. 8--9]{BH}, the spectrum of (the adjacency matrix of) $C_n$ is
$\left\{2\cos \frac{2j\pi}{n}:j=0,\dots,n-1 \right\}$, from which Part (i) follows.
Part (ii) follows from the Laplacian spectrum of $P_n$ given in \cite[p. 9]{BH}, as it is also the
signless Laplacian spectrum of $P_n$.

For $(n-2)I_n-Q(K_n-e)$, the two rows corresponding the end vertices of $e$ are equal and the remaining
$n-2$ rows are also equal, so the rank of $(n-2)I_n-Q(K_n-e)$ is at most two, and
$n-2$ is an eigenvalue of $Q(K_n-e)$ with multiplicity at least $n-2$.
Let $W_1$ be the set of vertices of degree $n-1$ in $K_n-e$ and $W_2=V(K_n-e)\setminus W_1$.
With respect to the partition $V(K_n-e)=W_1\cup W_2$, $Q(K_n-e)$ has an equitable quotient matrix $B$ with \[
B=\begin{pmatrix}
2n-4 &2\\
n-2&n-2
\end{pmatrix}.
\]
Let $f(x):=\det (xI_2-B)=x^2-(3n-6)x+2n^2-10n+12$. The roots of $f(x)=0$ are $x=r,s$,  where
\[
r=\frac{3}{2}n-3 +\frac{1}{2}\sqrt{n^2+4n-12},
\]
\[
s=\frac{3}{2}n-3 -\frac{1}{2}\sqrt{n^2+4n-12}.
\]
By Lemma \ref{quo}, $\sigma_Q(K_n-e)=\{(n-2)^{[n-2]}, r, s\}$.
It is easy to see that $r>n-2>s$, so $q_{n-3}(K_n-e)=q_{n-2}(K_n-e)=n-2$. 
This proves Part (iii).

Part (iv) follows from an easy calculation.

Let $G:=G_{n,3,2,a}$.
Assume that $a\le n-4-a$.
Let $v_1\dots v_4$ be a diametral path of $G$ so that $\delta_G(v_1)=a+1$ and $\delta_G(v_4)=n-3-a$.
As there are $a+1$ and $n-3-a$ equal rows in  $(n-3)I_n-Q(G)$, $n-3$ is an eigenvalue of $Q(G)$ with multiplicity at least $n-4$.
With respect to the partition $V(G)=\{v_1\}\cup N_G(v_1)\cup N_G(v_4)\cup \{v_4\}$, $Q(G)$ has an equitable quotient matrix $M$ with \[
M=\begin{pmatrix}
a+1&a+1&0&0\\
1&n-2+a&n-3-a&0\\
0&a+1&2n-6-a&1\\
0&0&n-3-a&n-3-a
\end{pmatrix}.
\]
Let $g(x):=\det(xI_4-M)$.
Suppose first that $a<n-4-a$.
Note that as a quartic function on $x$, $g(x)$ satisfies
\[
g(n-2)=4a(n-4-a)-(n-4)^2<0,
\]
\[
g(n-3)=(n-3)((n-4-a)a+n-3)>0,
\]
and
\[
g(a+1)=-(a+1)(2(n-4-a)(n-2a-4)-a-1)<0.
\]
It follows that
\[
\rho_4< a+1<\rho_3<n-3<\rho_2<n-2<\rho_1,
\]
where  $\rho_i=\rho_i(M)$ for $i=1,\dots, 4$.
By Lemma \ref{quo},  $\sigma_Q(G)=\{\rho_1, \rho_2,(n-3)^{[n-4]}, \rho_3, \rho_4\}$. As $n\ge 7$, we have  $q_{n-4}(G)=q_{n-3}(G)=q_{n-2}(G)=n-3$ and $q_{n-1}(G)=\rho_3<n-3$, as desired.
Suppose next that $a=n-4-a$, i.e., $a=\frac{n-4}{2}$.
Then $g(n-2)=0$ and $g(a)=0$.  By Lemma \ref{quo},
$n-2$ and $a$ are signless Laplacian eigenvalues of $G$.
Let $V_1=\{v_1, v_4\}$ and $V_2=N_G(v_1)\cup N_G(v_4)$. Then,
with respect to the partition $V(G)=V_1\cup V_2$, $Q(G)$ has an equitable quotient matrix $M'$ with
\[
M'=\begin{pmatrix}
a+1&a+1\\
1&n-1+2a
\end{pmatrix},
\]
whose characteristic polynomial is $h(x):=x^2-(5a+4)x+(4a+2)(a+1)$.
As
$h(a+1)=-(a+1)<0$
and
$h(a)=2a+2>0$,
we have $a<\rho_2'<a+1<\rho_1'$, where $\rho_i'=\rho_i(M')$ for $i=1,2$.
As $\rho_1'\ge \frac{5a+4}{2}>n-2$, we have
\[
a<\rho_2'<a+1<n-3<n-2<\rho_1'.
\]
 By Lemma \ref{quo} again, $\sigma_Q(G)=\{\rho_1', n-2, (n-3)^{[n-4]}, \rho_2', a\}$. So $q_{n-4}(G)=q_{n-3}(G)=q_{n-2}(G)=n-3$ and $q_{n-1}(G)=\rho_2'<n-3$.
 This proves Part (v).
\end{proof}

For integers $n$, $d$ and $t$ with $2\le d\le n-2$ and $2\le t\le d$, let $G_{n,d,t}$ be the graph  obtained from vertex disjoint union of $P_{d+1}:=v_1\dots v_{d+1}$ and  $K_{n-d-1}$  by adding all edges between vertices of $K_{n-d-1}$ and vertices $v_{t-1}$, $v_t$ and $v_{t+1}$.

\begin{lemma}\label{gndt}
For integers $n$, $d$ and $t$ with $2\le t\le d\le n-3$,
$m_{G_{n,d,t}}[0,n-d+1)\ge d$.
\end{lemma}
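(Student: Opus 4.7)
The plan is to apply Cauchy's interlacing theorem to a principal submatrix of $Q(G_{n,d,t})$ of order $n-2$ whose spectrum we can essentially read off. Set $G=G_{n,d,t}$, let $S:=V(G)\setminus\{v_{t-1},v_{t+1}\}$, and let $B$ be the principal submatrix of $Q(G)$ indexed by $S$. The structural point is that deleting $v_{t-1}$ and $v_{t+1}$ severs every edge between $V(K_{n-d-1})\cup\{v_t\}$ and the two ``outer'' segments of the path, so, using that $v_t$ is adjacent to every vertex of $K_{n-d-1}$, the subgraph $G[S]$ is the vertex-disjoint union of $K_{n-d}$ on $V(K_{n-d-1})\cup\{v_t\}$, a path $P_{t-2}$ on $\{v_1,\dots,v_{t-2}\}$ (empty if $t=2$), and a path $P_{d-t}$ on $\{v_{t+2},\dots,v_{d+1}\}$ (empty if $t=d$). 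Consequently $B$ is block diagonal with up to three blocks $B_1,B_2,B_3$ on these vertex sets.

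Next I would read off or bound the spectrum of each block. Every vertex in $V(K_{n-d-1})\cup\{v_t\}$ has degree $n-d+1$ in $G$ and all such pairs are adjacent, so $B_1$ is the $(n-d)\times(n-d)$ matrix with all diagonal entries equal to $n-d+1$ and all off-diagonal entries equal to $1$; its eigenvalues are $2(n-d)$ (once) and $n-d$ with multiplicity $n-d-1$. For the left path block $B_2$ (present when $t\ge 3$), the diagonal is $(1,2,\dots,2)$ of length $t-2$ with $1$'s off-diagonal on consecutive indices; this matrix is precisely the principal submatrix of $Q(P_{t-1})$ indexed by the first $t-2$ vertices of $P_{t-1}$, so Lemma~\ref{interlacing} together with Lemma~\ref{Cn}(ii) shows that every eigenvalue of $B_2$ is at most $q_1(P_{t-1})<4$. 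The same argument, with $Q(P_{d-t+1})$ in place of $Q(P_{t-1})$, gives that every eigenvalue of $B_3$ is less than $4$.

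Since $d\le n-3$ forces $n-d+1\ge 4$ while $n-d<n-d+1$, exactly one eigenvalue of $B$, namely $2(n-d)$, is at least $n-d+1$, and the remaining $n-3$ eigenvalues of $B$ all lie strictly below $n-d+1$. Applying Lemma~\ref{interlacing} to $B$ as a principal submatrix of $Q(G)$ of order $n-2$ yields $q_{i+2}(G)\le \rho_i(B)$ for $i=1,\dots,n-2$, and for $i\ge 2$ the right-hand side is less than $n-d+1$; hence $q_4(G),\dots,q_n(G)<n-d+1$. These are $n-3$ eigenvalues of $Q(G)$ in $[0,n-d+1)$, and since $d\le n-3$ we conclude $m_{G_{n,d,t}}[0,n-d+1)\ge n-3\ge d$. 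The only delicate point is the choice of the deletion set $\{v_{t-1},v_{t+1}\}$: this is exactly what is needed to merge $v_t$ with the clique into a single $K_{n-d}$ block (with only one ``large'' eigenvalue) while decoupling the two path arms, and the boundary cases $t=2$ or $t=d$ merely remove one of $B_2,B_3$ without affecting the count.
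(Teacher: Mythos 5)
Your proof is correct, and it takes a genuinely different route from the paper's. The paper argues by edge deletion: it removes the one or two path edges $v_{t-2}v_{t-1}$ and $v_{t+1}v_{t+2}$ so that the resulting graph is $(K_{n-d+2}-e)\cup P_{t-2}\cup P_{d-t}$, then combines the edge-interlacing Lemma~\ref{addedges} with the explicitly computed spectrum of $K_{n-d+2}-e$ from Lemma~\ref{Cn}(iii) to get $q_{n-d+1}(G)<n-d+1$. You instead delete the two \emph{vertices} $v_{t-1},v_{t+1}$ and apply Cauchy interlacing (Lemma~\ref{interlacing}) to the resulting order-$(n-2)$ principal submatrix, which is block diagonal with a clique block $(n-d)I+J$ and two path-type tridiagonal blocks; all the needed spectral information is then immediate (no quotient-matrix computation for $K_m-e$ is required, only $q_1(P_m)<4$). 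The trade-offs: your clique block is cleaner because excluding $v_{t\pm1}$ makes all retained high-degree vertices mutually adjacent with equal degree $n-d+1$, whereas the paper's $K_{n-d+2}-e$ keeps those two vertices and pays for it with a small quotient-matrix calculation; on the other hand, losing two rows in Cauchy interlacing costs you the index shift $q_{i+2}(G)\le\rho_i(B)$, which is exactly compensated by the fact that only one eigenvalue of $B$ reaches $n-d+1$. Your argument in fact yields the stronger conclusion $m_{G_{n,d,t}}[0,n-d+1)\ge n-3$, of which the stated bound $d$ is a special case via $d\le n-3$. All the details check out, including the boundary cases $t=2$ and $t=d$ and the identification of the path blocks as principal submatrices of $Q(P_{t-1})$ and $Q(P_{d-t+1})$.
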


\begin{proof} Let $G=G_{n,d,t}$.
Let $P:=v_1\dots v_{d+1}$ be a diametral path of $G$.
If $d=2$, then $G\cong K_n-v_1v_3$, so the result follows from Lemma \ref{Cn} (iii). Suppose that $d\ge 3$. It suffices to show that $q_{n-d+1}(G)<n-d+1$.

Suppose first that $t=2$. Then $G-v_{3}v_{4}\cong (K_{n-d+2}-e)\cup P_{d-2}$. By Lemma \ref{addedges},
 \begin{align*}
q_{n-d+1}(G)&\le q_{n-d}(G-v_{3}v_{4})=q_{n-d}((K_{n-d+2}-e)\cup P_{d-2})\\
&\le \max\{q_{n-d}(K_{n-d+2}-e),q_1(P_{d-2})\}.
\end{align*}
By Lemma \ref{Cn}, $q_{n-d}(K_{n-d+2}-e)= n-d$ and $q_1(P_{d-2})<4$. If $d\le n-4$, then
$\max\{n-d,q_1(P_{d-2})\}=n-d<n-d+1$. If $d=n-3$, then $\max\{n-d,q_1(P_{d-2})\}<4=n-d+1$.
So
\[
q_{n-d+1}(G)\le \max\{q_{n-d}(K_{n-d+2}-e),q_1(P_{d-2})\}<n-d+1.
\]
Similar argument applies to the case when $t=d$.
Suppose now that $3\le t\le d-1$. Then  $G-v_{t-2}v_{t-1}-v_{t+1}v_{t+2}\cong  (K_{n-d+2}-e)\cup P_{t-2}\cup P_{d-t}$. By Lemmas \ref{addedges} and \ref{Cn},
\begin{align*}
q_{n-d+1}(G) & \le q_{n-d-1}(G-v_{t-2}v_{t-1}-v_{t+1}v_{t+2})\\
&=q_{n-d-1}(P_{t-2}\cup (K_{n-d+2}-e)\cup P_{d-t})\\
&\le \max \{q_{n-d-1}(K_{n-d+2}-e),q_1(P_{t-2}),q_1(P_{d-t})\}\\
&<n-d+1. \qedhere
\end{align*}
\end{proof}

\begin{lemma}\label{gndra}
For integers $n$ and $t$ with $n\ge 6$ and $2\le t\le n-4$, $q_5(G_{n,n-3,t,1})<4$.
\end{lemma}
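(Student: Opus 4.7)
The plan is to identify a copy of $H := G_{6,3,2,1}$ inside $G := G_{n,n-3,t,1}$, pin down $\sigma_Q(H)$ exactly, and then detach this copy from the rest of $G$ by removing at most two path edges, so that Lemma \ref{addedges} combined with the path bound of Lemma \ref{Cn}(ii) delivers $q_5(G)<4$. For the set-up, let $u_1, u_2$ denote the two vertices of the $K_2$-part of $G$, so that $u_1u_2\in E(G)$, $N_G(u_1)\setminus\{u_2\}=\{v_{t-1},v_t,v_{t+1}\}$, and $N_G(u_2)\setminus\{u_1\}=\{v_t,v_{t+1},v_{t+2}\}$; a direct check of adjacencies then shows that the subgraph of $G$ induced by $\{v_{t-1}, v_t, v_{t+1}, v_{t+2}, u_1, u_2\}$ is isomorphic to $H$.

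To determine $\sigma_Q(H)$, I would first observe that in $H$ the pairs $\{v_2, u_1\}$ and $\{v_3, u_2\}$ are true twins of degree $4$, so $e_{v_2}-e_{u_1}$ and $e_{v_3}-e_{u_2}$ are $Q(H)$-eigenvectors with eigenvalue $3$. The remaining four eigenvalues come from the equitable quotient matrix
\[
B = \begin{pmatrix} 2 & 2 & 0 & 0 \\ 1 & 5 & 2 & 0 \\ 0 & 2 & 5 & 1 \\ 0 & 0 & 2 & 2 \end{pmatrix}
\]
with respect to the partition $\{v_1\}\cup\{v_2, u_1\}\cup\{v_3, u_2\}\cup\{v_4\}$. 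Since $B$ commutes with the order-reversing $4\times 4$ permutation matrix, it block-diagonalises into two $2\times 2$ blocks on the $\pm 1$-eigenspaces of that involution, with characteristic polynomials $x^2-9x+12$ and $(x-1)(x-4)$. Together with Lemma \ref{quo}, this yields $\sigma_Q(H)=\{\tfrac{9+\sqrt{33}}{2},\, 4,\, 3^{[2]},\, \tfrac{9-\sqrt{33}}{2},\, 1\}$, so that $q_1(H)>q_2(H)=4$ and $q_3(H)=3<4$.

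For the final step I would split into cases. If $n=6$, then $t=2$, $G=H$, and $q_5(G)=\tfrac{9-\sqrt{33}}{2}<4$ is immediate. If $n\ge 8$ and $3\le t\le n-5$, set $S=\{v_{t-2}v_{t-1},\, v_{t+2}v_{t+3}\}$, so that $G-S\cong P_{t-2}\cup H\cup P_{n-t-4}$ (with the convention that a path on $\le 1$ vertex is an isolated vertex or empty); two applications of Lemma \ref{addedges} give $q_5(G)\le q_3(G-S)$, and since only $q_1(H)$ and $q_2(H)=4$ among the eigenvalues of $G-S$ are at least $4$ (the path contributions being strictly less than $4$ by Lemma \ref{Cn}(ii)), one gets $q_3(G-S)<4$. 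If $n\ge 7$ and $t\in\{2, n-4\}$, then deleting the single edge $v_4v_5$ (respectively $v_{n-6}v_{n-5}$) already yields $G-S\cong H\cup P_{n-6}$; Lemma \ref{addedges} gives $q_5(G)\le q_4(G-S)$, and the same count of eigenvalues $\ge 4$ forces $q_4(G-S)<4$. The only computationally non-trivial step is the diagonalisation of the $4\times 4$ matrix $B$ in Step~1, but once its centrosymmetry is exploited that is routine, and the remainder reduces to bookkeeping with Lemma \ref{addedges} and the path bound $q_1(P_k)<4$.
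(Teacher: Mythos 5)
Your proposal is correct and follows essentially the same route as the paper: detach the six-vertex block $G_{6,3,2,1}$ by deleting one or two path edges (splitting into the cases $t=2$, $t=n-4$, and $3\le t\le n-5$) and combine Lemma \ref{addedges} with the path bound of Lemma \ref{Cn}. The only difference is cosmetic: you compute $\sigma_Q(G_{6,3,2,1})$ explicitly via twin eigenvectors and a centrosymmetric equitable quotient, whereas the paper simply cites the needed eigenvalues of this six-vertex graph from Lemma \ref{Cn}(iv).
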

\begin{proof}
Let $G=G_{n,n-3,t,1}$ and let $P:=v_1\dots v_{n-2}$ be a diametral path of $G$.

If $n=6$, then the result follows directly from Lemma \ref{Cn}.
Suppose that $n\ge 7$.
Let $G=G_{n,n-3,t,1}$.
If $t=2$, then $G-v_{4}v_{5}\cong G_{6,3,2,1}\cup P_{n-6}$, so by Lemmas \ref{addedges} and \ref{Cn}, \[
q_5(G)\le q_4(G-v_4v_5)\le \max \{q_4(G_{6,3,2,1}),q_1(P_{n-6})\}<4.
\]
If $t=n-4$, the above argument applies by replacing  $v_{4}v_{5}$ by $v_{n-6}v_{n-5}$.
Suppose  that $3\le t\le n-5$. As $G-v_{t-2}v_{t-1}-v_{t+2}v_{t+3}\cong P_{t-2}\cup G_{6,3,2,1}\cup P_{n-t-4}$, we have by Lemmas \ref{addedges} and \ref{Cn}, \[
q_5(G)\le q_3(G-v_{t-2}v_{t-1}-v_{t+2}v_{t+3})\le \max \{q_1(P_{t-2}),q_3(G_{6,3,3,1}),q_1(P_{n-t-4}) \}<4. \qedhere
\]
\end{proof}

\begin{lemma}\label{gndr}
For integers $n$, $d$, $t$ and $a$ with $2\le t\le d-1\le n-4$ and $1\le a\le n-d-2$, $m_{G_{n,d,t,a}}[0,n-d+1)\ge d$.
\end{lemma}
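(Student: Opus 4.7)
The plan is to imitate the edge-deletion argument used in the proof of Lemma~\ref{gndt}, but with the complete-graph-minus-an-edge replaced by the core $G_{n-d+3,3,2,a}$ whose spectrum is controlled by Lemma~\ref{Cn}(v). Writing $G:=G_{n,d,t,a}$ with diametral path $v_1\dots v_{d+1}$, I distinguish three cases. For $t=2$ I delete the single edge $v_4v_5$, giving
\[
G-v_4v_5\cong G_{n-d+3,3,2,a}\cup P_{d-3};
\]
the case $t=d-1$ is symmetric. For $3\le t\le d-2$ I delete both $v_{t-2}v_{t-1}$ and $v_{t+2}v_{t+3}$, obtaining
\[
G-v_{t-2}v_{t-1}-v_{t+2}v_{t+3}\cong P_{t-2}\cup G_{n-d+3,3,2,a}\cup P_{d-t-1}.
\]

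In the range $d\le n-4$ the core $G_{n-d+3,3,2,a}$ has $n-d+3\ge 7$ vertices, and Lemma~\ref{Cn}(v) tells us that its only eigenvalue strictly greater than $n-d+1$ is $q_1$; all remaining eigenvalues are $\le n-d+1$, with strict inequality except possibly a single eigenvalue equal to $n-d+1$ when $2a=n-d-1$. The path components have eigenvalues $<4\le n-d+1$ by Lemma~\ref{Cn}(ii) combined with $d\le n-3$. Hence the disjoint union above has at most two eigenvalues in $[n-d+1,\infty)$, and iterating Lemma~\ref{addedges} (losing one eigenvalue per deleted edge) yields the bound $q_{n-d+1}(G)<n-d+1$; equivalently, $m_G[0,n-d+1)\ge d$.

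The main obstacle is the boundary case $d=n-3$, where $a=1$ is forced and the core reduces to $G_{6,3,2,1}$, to which Lemma~\ref{Cn}(v) does not apply; moreover here $q_2$ of the core sits exactly on the threshold $n-d+1=4$, so the naive two-edge-deletion interlacing loses one eigenvalue. This final case is treated by appealing to Lemma~\ref{Cn}(iv) (which controls the relevant $6$-vertex core), to Lemma~\ref{gndra} (which supplies $q_5(G_{n,n-3,t,1})<4$), and to a sharper accounting of the boundary eigenvalue via the equitable partition of the core.
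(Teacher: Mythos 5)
Your main argument is the same as the paper's: the identical edge deletions ($v_4v_5$ for $t=2$, the symmetric one for $t=d-1$, and the pair $v_{t-2}v_{t-1},v_{t+2}v_{t+3}$ for $3\le t\le d-2$), the identical core $G_{n-d+3,3,2,a}\cup(\text{paths})$, and the identical appeal to Lemmas \ref{addedges} and \ref{Cn}. The only real difference is bookkeeping: the paper bounds the specific eigenvalue $q_{n-d}$ or $q_{n-d-1}$ of the core directly by the statement of Lemma \ref{Cn}(v) (which gives $q_{m-4}=q_{m-3}=m-3$ for the $m$-vertex core), whereas you count how many eigenvalues of the core lie in $[n-d+1,\infty)$; note that your count (``at most two, with equality only when $2a=n-d-1$'') is not deducible from the statement of Lemma \ref{Cn}(v) alone and requires the full spectra computed in its proof. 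Both accountings give the same conclusion for $d\le n-4$.

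Where you genuinely diverge is the boundary case $d=n-3$, and you are right that it is a problem: there the core is $G_{6,3,2,1}$, Lemma \ref{Cn}(v) does not apply, and $q_2$ of that core equals $4=n-d+1$ exactly (the paper's own proof silently assumes $n-d\ge 5$, which does not follow from $d-1\le n-4$, so it has the same gap). However, your proposed repair does not close it as stated. For $d=n-3$ the lemma asserts $m_G[0,4)\ge n-3$, i.e.\ $q_4(G_{n,n-3,t,1})<4$, whereas Lemma \ref{gndra} only supplies $q_5<4$, which is $m_G[0,4)\ge n-4=d-1$ --- off by one. The single-edge deletion still works here because it only costs one index ($q_4(G)\le q_3(G_{6,3,2,1}\cup P_{n-6})=3<4$, using $q_3(G_{6,3,2,1})=3$, which must be computed separately since Lemma \ref{Cn}(iv) concerns the different graph $G_{6,3,1,1}$), but for $3\le t\le d-2$ the two-edge deletion yields only $q_4(G)\le q_2(P_{t-2}\cup G_{6,3,2,1}\cup P_{d-t-1})=4$, with no strictness. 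So the ``sharper accounting'' you allude to is exactly the missing step; you would need either a different decomposition in that subcase or a direct argument that $q_4<4$ there.
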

\begin{proof}
Let $G=G_{n,d,t,a}$ and let $P:=v_1\dots v_{d+1}$ be a diametral path of $G$.

If $d=3$, then the result follows directly from Lemma \ref{Cn}. Suppose that $d\ge 4$. Let $G=G_{n,d,t,a}$.
If $t=2$, then $G-v_4v_5\cong G_{n-d+3,3,2,a}\cup P_{d-3}$, so by Lemmas \ref{addedges} and \ref{Cn}, \[
q_{n-d+1}(G)\le q_{n-d}(G-v_4v_5)\le \max\{q_{n-d}(G_{n-d+3,3,2,a}),q_1(P_{d-3})\}.
\]
As $n-d\ge 5$, we have by Lemma \ref{Cn} that $q_{n-d}(G_{n-d+3,3,2,a})=n-d>4>q_1(P_{d-3})$.
So
\[
q_{n-d+1}(G)\le\max\{q_{n-d}(G_{n-d+3,3,2,a}),q_1(P_{d-3})\}=n-d<n-d+1.
\]
Similar argument applies to the case when $t=d-1$.
Suppose that $3\le t\le d-2$. Then $G-v_{t-2}v_{t-1}-v_{t+2}v_{t+3}\cong G_{n-d+3,3,2,a}\cup P_{t-2}\cup P_{d-t-1}$. By Lemmas \ref{addedges} and \ref{Cn},
\begin{align*}
q_{n-d+1}(G)&\le q_{n-d-1}(G-v_{t-2}v_{t-1}-v_{t+2}v_{t+3})\\
&=q_{n-d-1}(G_{n-d+3,3,2,a}\cup P_{t-2}\cup P_{d-t-1})\\
&\le \max \{q_{n-d-1}(G_{n-d+3,3,2,a}), q_1(P_{t-2}),q_1(P_{d-t-1}) \}\\
&<n-d+1.\qedhere
\end{align*}
\end{proof}

Given a graph $G$, for $v\in V(G)$ and a subgraph $F$ of $G$, let
$N_{G,F}(v)=N_G(v)\cap V(F)$ and $\delta_{G,F}(v)=|N_{G,F}(v)|$.

\section{Main results}

In this section, we provide some bounds for the number of signless Laplacian eigenvalues within specific intervals for $n$-vertex graphs with given parameters, such as matching number and diameter.

\subsection{Distribution of signless Laplacian eigenvalues and matching number}

We need three lemmas.

\begin{lemma}\label{bcn}
For $n\ge 3$ with $n\ne 5$, $m_{C_n}[0,1)\le \nu(C_n)-1$.
\end{lemma}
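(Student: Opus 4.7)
The plan is to use the explicit spectrum of $C_n$ provided by Lemma~\ref{Cn}(i). Since $\sigma_Q(C_n)=\{2+2\cos(2j\pi/n):j=0,\dots,n-1\}$, a signless Laplacian eigenvalue of $C_n$ lies in $[0,1)$ if and only if $\cos(2j\pi/n)<-1/2$, which is equivalent to $2j\pi/n$ lying in the open arc $(2\pi/3,4\pi/3)$, that is, $j/n\in(1/3,2/3)$. Hence
\[
m_{C_n}[0,1)=\bigl|\{j\in\{0,1,\dots,n-1\}: n/3<j<2n/3\}\bigr|.
\]

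Next I would combine this count with the elementary fact $\nu(C_n)=\lfloor n/2\rfloor$, reducing the lemma to an arithmetic statement: the number of integers in the open interval $(n/3,2n/3)$ is at most $\lfloor n/2\rfloor-1$. I would then perform a short case analysis according to $n\bmod 6$, since the count depends only on this residue. Writing $n=6m+r$ for $r\in\{0,1,2,3,4,5\}$, one gets the count exactly: $2m-1$, $2m$, $2m+1$, $2m$, $2m+1$, $2m+2$ respectively, while $\lfloor n/2\rfloor-1$ equals $3m-1$, $3m-1$, $3m$, $3m$, $3m+1$, $3m+1$. The required inequality holds in every case, with equality only in the residue-$5$ branch at $m=1$ (yielding $n=11$) and failing strictly at $m=0$ (yielding $n=5$). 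The very small cases $n=3,4$ fall into the $r=3,4$ branches with $m=0$ and are easily verified directly.

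The main potential pitfall is the endpoint behavior of the interval $(n/3,2n/3)$: when $3\mid n$ the endpoints are integers but are \emph{excluded} from the open interval, which is precisely the reason the count dips by one in the residues $0$ and $3$, and this must be tracked carefully in the case analysis. Beyond this bookkeeping, the argument is a direct calculation and does not require any further spectral input. The only genuine exception produced by the analysis is $n=5$, matching the hypothesis of the lemma.
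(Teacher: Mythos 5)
Your proof is correct and follows essentially the same route as the paper: both read off $m_{C_n}[0,1)$ from the explicit spectrum $2+2\cos(2j\pi/n)$ and compare it with $\nu(C_n)=\lfloor n/2\rfloor$ via a residue analysis (the paper works mod $3$, you work mod $6$, but the content is identical, and your version actually supplies the counting details the paper omits). The only slip is the parenthetical claim that equality occurs only at $n=11$ --- it also occurs at $n=3,4,7,8$ --- but this is immaterial since the lemma asserts only the inequality.
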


\begin{proof}
By Lemma \ref{Cn},
\[
m_{C_n}[0,1)=\begin{cases}
\lceil \frac{n}{3}\rceil &\mbox{ if }n\equiv 2  \pmod 3,\\
\lceil \frac{n}{3}\rceil-1 &\mbox{ if }n\equiv 0,1 \pmod  3.
\end{cases}
\]
As $\nu(C_n)=\lfloor\frac{n}{2}\rfloor$, $m_{C_n}[0,1)\le \nu(C_n)-1$, as desired.
\end{proof}

\begin{lemma}\label{k2t}
(i) For $n\ge 4$, $m_{K_{2,n-2}}[0,1)=\nu(K_{2,n-2})-1$.\\
(ii) $m_{P_6}[0,1)=\nu(P_6)-1$.
\end{lemma}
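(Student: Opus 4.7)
The plan is to reduce both parts to direct spectral computations, exploiting that both graphs are bipartite (so the signless Laplacian spectrum coincides with the Laplacian spectrum, as noted in the introduction), then count the eigenvalues below $1$.

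For part (i), I would first note that $\nu(K_{2,n-2}) = \min\{2,n-2\} = 2$ for $n\ge 4$, so the target is $m_{K_{2,n-2}}[0,1)=1$, i.e. exactly one signless Laplacian eigenvalue of $K_{2,n-2}$ lies in $[0,1)$. Since $K_{2,n-2}$ is bipartite, it suffices to compute the Laplacian spectrum. This can be done either by invoking the well-known formula $\sigma(L(K_{p,q}))=\{0,p^{[q-1]},q^{[p-1]},p+q\}$ with $p=2$, $q=n-2$, or by using the equitable bipartition $V(K_{2,n-2})=\{u_1,u_2\}\cup\{w_1,\ldots,w_{n-2}\}$, whose quotient matrix of $Q(K_{2,n-2})$ is
\[
B=\begin{pmatrix}n-2 & n-2\\ 2 & 2\end{pmatrix},
\]
with eigenvalues $0$ and $n$; the remaining eigenvalues are read off by a rank argument (vectors supported on the two parts that sum to zero yield the eigenvalue $2$ with multiplicity $n-3$ and the eigenvalue $n-2$ with multiplicity $1$). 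Either way one obtains $\sigma_Q(K_{2,n-2})=\{0,2^{[n-3]},n-2,n\}$. As $2>1$, the only eigenvalue in $[0,1)$ is $0$, so $m_{K_{2,n-2}}[0,1)=1=\nu(K_{2,n-2})-1$.

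For part (ii), $\nu(P_6)=3$, so the target is $m_{P_6}[0,1)=2$. Again $P_6$ is bipartite, so I use the signless-Laplacian-equals-Laplacian identification and the standard formula that the Laplacian eigenvalues of $P_n$ are $2-2\cos\frac{k\pi}{n}$ for $k=0,\ldots,n-1$. Specializing to $n=6$, the six eigenvalues are $0$, $2-\sqrt3$, $1$, $2$, $3$, $2+\sqrt3$. Exactly two of these, namely $0$ and $2-\sqrt3\approx 0.27$, lie in $[0,1)$, giving $m_{P_6}[0,1)=2=\nu(P_6)-1$.

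Since both graphs admit fully explicit spectra, there is essentially no obstacle beyond selecting the right formulas; the only care needed is in part (i) to confirm multiplicities (the $2^{[n-3]}$ block, which one can verify either by the $K_{p,q}$-Laplacian formula or by the rank/equitable-partition argument above) and to note that for $n\ge 4$ all nonzero eigenvalues are at least $2$, so none of them accidentally fall below $1$.
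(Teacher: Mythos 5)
Your proposal is correct and follows essentially the same route as the paper: both parts are settled by writing down the explicit signless Laplacian spectra ($\sigma_Q(K_{2,n-2})=\{n,n-2,2^{[n-3]},0\}$ and the spectrum of $P_6$, which the paper dismisses as an easy calculation) and counting the eigenvalues in $[0,1)$. Your use of the bipartite Laplacian identification and the equitable-partition/rank argument simply supplies the details the paper leaves implicit.
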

\begin{proof}
As $\sigma_Q(K_{2,n-2})=\{n,n-2,2^{[n-3]},0\}$, $m_{K_{2,n-2}}[0,1)=1$.
So the result follows by noting that $\nu(K_{2,n-2})=2$. This proves Part (i).

Part (ii) follows by an easy calculation.
\end{proof}

Let $G$ be a graph on $n\ge 2$ vertices without isolated vertices.
Note that the matching number of $G$ is an upper bound of the domination number of $G$.
So from \cite[Theorem 9]{GH} which states that the domination number is an upper bound for
$m_G[0,1)$, one gets the following lemma.

\begin{lemma} \label{med}
Let $G$ be a graph  without isolated vertices. Then $m_G[0,1)\le \nu(G)$.
\end{lemma}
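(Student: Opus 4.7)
The plan is to combine two ingredients, as the paragraph preceding the lemma already suggests. Write $\gamma(G)$ for the domination number of $G$. The strategy is a one-line chain
\[
m_G[0,1)\ \le\ \gamma(G)\ \le\ \nu(G),
\]
and the job is just to justify the two inequalities.

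For the first inequality, I would simply quote \cite[Theorem 9]{GH}, which is stated in the paragraph as: the domination number is an upper bound for $m_G[0,1)$. Since this is an external input, nothing needs to be done beyond citing it; in particular the hypothesis that $G$ has no isolated vertices (needed so that $\gamma(G)$ behaves nicely and so that the second step applies) is exactly the hypothesis of the present lemma.

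For the second inequality, I would invoke the classical result of Bollob\'as and Cockayne that $\gamma(G)\le\nu(G)$ whenever $G$ has no isolated vertex. The short argument, if a self-contained justification is preferred, goes as follows: take a maximum matching $M$ and pick, for each edge of $M$, the endpoint having a neighbour outside $V(M)$ (if both endpoints have such neighbours, pick either; if neither does, pick either). The set of chosen endpoints dominates $V(M)$ since each matched edge is incident to a chosen vertex, and it dominates $V(G)\setminus V(M)$ because any unmatched vertex $u$ must have all neighbours in $V(M)$ (else we could enlarge $M$), and one checks via the maximality of $M$ that at least one neighbour of $u$ was chosen. This gives a dominating set of size $|M|=\nu(G)$, so $\gamma(G)\le\nu(G)$.

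Chaining the two inequalities yields $m_G[0,1)\le\nu(G)$, which is the claim. There is no real obstacle here; the only subtle point is that both steps genuinely require the absence of isolated vertices (the Bollob\'as--Cockayne bound fails for graphs with isolated vertices, since an isolated vertex must be in every dominating set but contributes $0$ to $\nu(G)$), so this hypothesis should be stated explicitly when applying each step.
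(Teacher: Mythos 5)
Your proof is correct and follows exactly the paper's route: the authors also derive the lemma from the chain $m_G[0,1)\le\gamma(G)\le\nu(G)$, citing \cite[Theorem 9]{GH} for the first inequality and the standard domination-versus-matching bound for the second. Your added self-contained justification of $\gamma(G)\le\nu(G)$ is a harmless bonus, not a deviation.
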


For graphs with minimum degree at least two and one component different from $C_5$, we have a better bound than the one in Lemma \ref{med}.

\begin{theorem}\label{delta2}
Let $G$ be a graph with $\delta(G)\ge 2$. If $G\not\cong kC_5$ for any positive integer $k$, then $m_G[0,1)\le \nu(G)-1$.
\end{theorem}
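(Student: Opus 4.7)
My plan is to reduce the theorem to a component-level strengthening and prove it by induction. Decompose $G$ into its connected components $H_1, \dots, H_k$; every $H_i$ has $\delta(H_i) \geq 2$, and since $G \not\cong \ell C_5$, some component (say $H_1$) is not isomorphic to $C_5$. Because both $m_G[0,1)$ and $\nu(G)$ are additive over disjoint unions, the theorem follows once one proves the following claim: every connected graph $H$ with $\delta(H) \geq 2$ and $H \not\cong C_5$ satisfies $m_H[0,1) \leq \nu(H) - 1$. Given the claim, apply it to $H_1$ and apply Lemma \ref{med} to $H_2 \cup \dots \cup H_k$ (which has no isolated vertex), then sum to get $m_G[0,1) \leq (\nu(H_1)-1) + \sum_{i\geq 2}\nu(H_i) = \nu(G)-1$.

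I would prove the claim by induction on $|V(H)|$, splitting on whether $H$ is a cycle. If $H = C_n$ with $n \geq 3$ and $n \neq 5$, the claim is exactly Lemma \ref{bcn}. Otherwise $H$ is connected with minimum degree $\geq 2$ but not a cycle, so $H$ has a vertex of degree at least $3$. I would then try to pick an edge $e = uv$ with $\delta_H(u), \delta_H(v) \geq 3$, so that $H - e$ still has minimum degree $\geq 2$; Lemma \ref{addedges} gives $q_i(H) \geq q_i(H - e)$ for every $i$, hence $m_H[0,1) \leq m_{H-e}[0,1)$. Running the componentwise argument on $H - e$ (using the inductive claim on each non-$C_5$ component and $m_{C_5}[0,1) = \nu(C_5) = 2$ on each $C_5$ component, as in Lemma \ref{med}) then yields $m_{H-e}[0,1) \leq \nu(H - e) - 1 \leq \nu(H) - 1$, provided $H - e$ contains at least one non-$C_5$ component.

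The hard part will be the residual exceptional configurations: first, when $H$ has no edge with both endpoints of degree $\geq 3$ (so every vertex of degree $\geq 3$ has only degree-$2$ neighbors, and $H$ is a subdivision of some smaller multigraph), and second, when every candidate edge $e$ leaves $H - e$ as a disjoint union of $C_5$'s, which forces $H$ to be obtained from $\ell C_5$ by a single edge modification. Both are very restricted families and I would dispatch them by direct spectral analysis: for each such $H$, produce an equitable partition of $V(H)$ whose quotient matrix (Lemma \ref{quo}) and Cauchy interlacing (Lemma \ref{interlacing}) together exhibit at least $\nu(H) - 1$ signless Laplacian eigenvalues of $H$ that are $\geq 1$. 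In this step one must also be careful with the induction base to ensure that small cases (e.g., $H$ on at most a handful of vertices, and graphs like $K_{2,n-2}$ where equality in the theorem is actually attained) have already been verified so that the inductive appeal to the claim on $H - e$ is legitimate.
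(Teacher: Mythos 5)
Your reduction to connected components is sound, and the branch where you delete an edge $e=uv$ with $\delta_H(u),\delta_H(v)\ge 3$ is in the right spirit (the paper also relies on Lemma \ref{addedges} to pass to spanning subgraphs, and on Lemmas \ref{bcn} and \ref{med} to combine the pieces). But there are two genuine gaps. First, a technical one: you induct on $|V(H)|$, yet deleting a non-bridge edge does not change the vertex set, so the inductive appeal to the claim on the components of $H-e$ is circular; you would need to induct on the number of edges (or on $|V|+|E|$) instead. Second, and more seriously, your residual case (1) --- connected $H$ with $\delta(H)\ge 2$, not a cycle, in which no edge joins two vertices of degree at least $3$ --- is not a ``very restricted family.'' It consists of all graphs obtained from multigraphs of minimum degree $3$ by subdividing every edge at least once (for instance, the subdivision of any cubic graph), an infinite and structurally arbitrary class. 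Such graphs admit no useful equitable partition in general, so the proposed ``direct spectral analysis'' via Lemmas \ref{quo} and \ref{interlacing} cannot be carried out graph by graph, and you give no uniform argument for this class. As written, the proof does not go through.

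For comparison, the paper sidesteps this entirely by never requiring an edge between two high-degree vertices: it selects a cycle $C$ in a non-$C_5$ component so that $G-V(C)$ has as few isolated vertices as possible, deletes the edges joining $C$ to the rest, and charges the ``$-1$'' saving to one of the base graphs $C_n$ ($n\ne 5$), $P_6$, or $K_{2,n-2}$ (Lemmas \ref{bcn} and \ref{k2t}), using Lemma \ref{med} on the remainder. The delicate work is in re-choosing the cycle when $|V(C)|=5$ or when isolated vertices appear, which forces the $K_{2,2+|W|}$ configuration. If you want to salvage your edge-deletion induction, you would need to replace case (1) with an argument of this kind (peeling off a whole cycle rather than a single edge), at which point you have essentially reconstructed the paper's proof.
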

\begin{proof}
As $\delta(G)\ge 2$, there exists a cycle in some component $G_0$ of $G$ that is not a $C_5$.
Let $C$ be a cycle in $G_0$ such that $G-V(C)$ has minimum number of isolated vertices.
Denote by $W$ the set of isolated vertices of $G-V(C)$.

Suppose first that $W=\emptyset$. If  $|V(C)|\ne 5$, then by Lemmas \ref{addedges}, \ref{bcn} and \ref{med},
\begin{align*}
m_G[0,1)&\le m_{C\cup (G-V(C))}[0,1)\\
&=m_{C}[0,1)+m_{G-V(C)}[0,1)\\
&\le \nu(C)-1+\nu(G-V(C))\\
&\le \nu(G)-1.
\end{align*}
Suppose that $|V(C)|=5$. Then there exists a component $H$ of $G_0-V(C)$ such that there is a vertex $v\in V(H)$ with $N_{G_0, C}(v)\ne \emptyset$. Assume that $v$ is such a vertex with least number of neighbors in $H$. Denote by $v_1$ one neighbor of $v$ in $C$.
By the choice of $v$, $H-v$ has at most one isolated vertex.

Suppose that $H-v$ has exactly one isolated vertex, say $v'$. Then, as $\delta(G)\ge 2$, $v$ is a neighbor of $v'$ and $v'$  has a neighbor, say  $v_1'$, on  $C$. Then we have another cycle $C'=vv'v_1'Pv_1v$, where $P$ denotes the longer path on $C$ from $v_1'$ to $v_1$ if $v_1\ne v_1'$, and $P=v_1=v_1'$ otherwise. Note that $|V(C')|\ne 5$ and $G-V(C')$ has no isolated vertices. Now the result follows by replacing $C$ by $C'$ and using the previous argument.

If $H-v$ has no isolated vertices, then $G[V(C)\cup V(H)]$ contains $P_6\cup (H-v)$ as a spanning subgraph, so we have by Lemmas \ref{addedges}, \ref{k2t} and \ref{med} that
\begin{align*}
m_G[0,1)&\le m_{G[V(C)\cup V(H)]}[0,1)+m_{G-V(C)-V(H)}[0,1)\\
&\le m_{P_6}[0,1)+m_{H-v}[0,1)+m_{G-V(C)-V(H)}[0,1)\\
&\le \nu(P_6)-1+\nu(H-v)+\nu(G-V(C)-V(H))\\
&\le \nu(G)-1,
\end{align*}
as desired.

Suppose next that $W\ne \emptyset$, say $w\in W$.
As $\delta(G)\ge 2$, $w$ has  at least two neighbors, say $w_1$ and $w_2$, on $C$. If $|V(C)|=3$, then
$C'=ww_1w_3w_2w$, with $w_3$ denoting  the  vertex on $C$ different from $w_1$ and $w_2$,
is a cycle of $G$ such that $G-V(C')$ has less isolated vertices than $G-V(C)$, a contradiction.
So $|V(C)|\ge 4$.
If $|V(C)|\ge 5$, or $|V(C)|=4$ with $w_1w_2\in E(C)$, then $C'=ww_1Qw_2w$,
 with $Q$ denoting a shortest path on $C$ from $w_1$ and $w_2$,
is a cycle of $G$ such that $G-V(C')$ has less isolated vertices than $G-V(C)$, a contradiction.
So $|V(C)|=4$ and no vertex in $W$ is adjacent to two consecutive vertices on $C$.
Let $C:=u_1u_2u_3u_4$, $W_1=\{w\in W: N_{G,C}(w)=\{u_1,u_3\}\}$ and $W_2=\{w\in W:N_{G,C}(w)=\{u_2,u_4\}\}$.
If $W_1\ne\emptyset$ and $W_2\ne \emptyset$, say $w_1\in W_1$ and $w_2\in W_2$, then $C':=u_1u_2w_2u_4u_3w_1u_1$ is a cycle of $G$ such that $G-V(C')$ has less isolated vertices than $G-V(C)$, a contradiction.
So $W_1=\emptyset$ or $W_2=\emptyset$. Assume that $W_2=\emptyset$. Then $G_1:=G[V(C)\cup W_1]$ contains $K_{2,2+|W|}$ as a spanning subgraph,  so by Lemmas \ref{addedges}, \ref{k2t} and \ref{med},
\begin{align*}
m_G[0,1)&\le m_{G_1}[0,1)+m_{G-V(G_1)}[0,1)\\
&\le \nu(G_1)-1+\nu(G-V(G_1))\\
&\le \nu(G)-1,
\end{align*}
as desired.
\end{proof}

Lemma \ref{k2t} shows the sharpness of the bound in Theorem \ref{delta2}.
Akbari el al. \cite[Theorem 3.4]{AADT} reported a similar result on the distribution of the Laplacian eigenvalues.
A  known result related to Theorem \ref{delta2} is:
For an $n$-vertex graph $G$ without isolated vertices, $m_G[0,2)\le n-\nu (G)$, see \cite{GH}.

Denote by $\alpha(G)$ the independence number of a graph $G$.
Ghodrati and Hosseinzadeh \cite{GH} noted that for a graph on $n$ vertices, $m_G[0,1)\le \alpha(G)$ and
$m_G[0, n-\alpha(G)]\ge \alpha (G)-1$.  As the  signless Laplacian version of \cite[Theorem 2.12]{Me}, we have

\begin{theorem} For a graph $G$ with $n$ vertices,
$\alpha(G)\le \min\{m_G[\delta(G), 2n-2], m_G[0, \Delta(G)]\}$, where $\Delta(G)$ is the maximum degree of $G$.
\end{theorem}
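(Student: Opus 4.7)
The plan is to use Cauchy's interlacing theorem (Lemma \ref{interlacing}) applied to the principal submatrix of $Q(G)$ indexed by a maximum independent set, exploiting the key observation that an independent set produces a \emph{diagonal} principal submatrix of $Q(G)$.

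Let $S\subseteq V(G)$ be a maximum independent set, so $|S|=\alpha(G)$. Let $B$ be the principal submatrix of $Q(G)$ with rows and columns indexed by $S$. Since $S$ is independent, $q_{uv}=0$ for distinct $u,v\in S$, so $B$ is the diagonal matrix $\mathrm{diag}(\delta_G(v):v\in S)$. Therefore the spectrum of $B$ consists of the $\alpha(G)$ degrees $\delta_G(v)$, $v\in S$, each of which lies in $[\delta(G),\Delta(G)]$.

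Next I would apply Lemma \ref{interlacing} with $p=\alpha(G)$, giving
\[
q_{n-\alpha(G)+i}(G)\le \rho_i(B)\le q_i(G)\quad\text{for } i=1,\dots,\alpha(G).
\]
From the right inequality, $q_i(G)\ge \rho_i(B)\ge \delta(G)$ for $i=1,\dots,\alpha(G)$, and since $q_i(G)\le 2n-2$ always holds, the eigenvalues $q_1(G),\dots,q_{\alpha(G)}(G)$ all lie in $[\delta(G),2n-2]$. Hence $m_G[\delta(G),2n-2]\ge \alpha(G)$. From the left inequality, $q_{n-\alpha(G)+i}(G)\le \rho_i(B)\le \Delta(G)$ for $i=1,\dots,\alpha(G)$, and since $q_j(G)\ge 0$ always, the eigenvalues $q_{n-\alpha(G)+1}(G),\dots,q_n(G)$ all lie in $[0,\Delta(G)]$. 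Hence $m_G[0,\Delta(G)]\ge \alpha(G)$.

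There is no serious obstacle here; the whole argument hinges on the single structural fact that the $Q$-principal submatrix on an independent set is diagonal, which turns Cauchy interlacing into a direct comparison of signless Laplacian eigenvalues with vertex degrees. The only minor care needed is to notice that both conclusions come from the \emph{same} interlacing inequality, read once from above and once from below.
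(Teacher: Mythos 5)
Your proof is correct and is essentially the paper's argument: both rest on the observation that the principal submatrix of $Q(G)$ indexed by an independent set is diagonal with the vertex degrees on the diagonal, combined with Cauchy interlacing. The only cosmetic difference is that the paper applies the interlacing to the shifted matrices $Q(G)-\delta(G)I_n$ and $\Delta(G)I_n-Q(G)$ to count nonnegative eigenvalues, whereas you read the two conclusions directly off the two sides of the interlacing inequality for $Q(G)$ itself.
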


\begin{proof}
Assume  that $\{v_1,\dots, v_k\}$ is an independent set of $G$, where $k=\alpha(G)$. Let $B$ be the principal submatrix of
$Q(G)-\delta(G)I_n$ whose rows and columns are indexed by $v_1, \dots, v_k$. It is evident that all eigenvalues of $B$ are nonnegative. By Lemma \ref{interlacing},
$\rho_k(Q(G)-\delta(G)I_n)\ge \rho_k(B)\ge 0$, so $q_k(G)\ge \delta(G)$.
It follows that $m_G[\delta(G), 2n-2]\ge k$.
Similarly, $\rho_k(\Delta(G)I-Q(G))\ge 0$, so $m_G[0, \Delta(G)]\ge k$.
\end{proof}

As an immediate consequence of the previous theorem, we have: For a graph with minimum degree one,
$m_G[0,1)\le \alpha(G)\le m_G[1, 2n-2]$.

\subsection{Distribution of signless Laplacian eigenvalues and diameter}

An easily obtained relation between the distribution of signless Laplacian eigenvalues and diameter is:
For  an $n$-vertex connected  graph  with diameter $d$,
$m_G(2, 2n-2]\ge \lfloor \frac{d}{2}\rfloor$. It follows from the following easy result with a similar
version for Laplacian eigenvalues (see \cite[Theorem 2.15]{Me}).

\begin{theorem}
Let $G$ be  an $n$-vertex connected  graph  and $\ell$  the length of a longest path of $G$. Then
$m_G(2, 2n-2]\ge \lfloor \frac{\ell}{2}\rfloor$.
\end{theorem}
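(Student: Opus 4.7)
The plan is to reduce to a spanning subgraph on which the spectrum is essentially known. Let $P : v_0 v_1 \cdots v_\ell$ be a longest path of $G$, and let $G'$ be the spanning subgraph of $G$ obtained by deleting every edge of $G$ that does not lie on $P$, so that $G' \cong P_{\ell+1} \cup (n-\ell-1)K_1$. Reinserting these deleted edges one at a time and applying Lemma \ref{addedges} at each step yields $q_i(H+e)\ge q_i(H)$ for every index $i$, and telescoping over all reinserted edges gives
\[
q_i(G) \ge q_i(G') \quad \text{for all } i = 1, \ldots, n.
\]

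Since the signless Laplacian of a disjoint union is block diagonal, $\sigma_Q(G') = \sigma_Q(P_{\ell+1}) \cup \{0^{[n-\ell-1]}\}$. Because $P_{\ell+1}$ is bipartite, its signless Laplacian spectrum coincides with its Laplacian spectrum, which is the classical set $\{2 - 2\cos(j\pi/(\ell+1)) : j = 0, 1, \ldots, \ell\}$. Arranging these in nonincreasing order gives
\[
q_i(P_{\ell+1}) = 2 + 2\cos\frac{i\pi}{\ell+1}, \quad i = 1, \ldots, \ell+1.
\]
Thus $q_i(P_{\ell+1}) > 2$ exactly when $i\pi/(\ell+1) < \pi/2$, i.e.\ when $i < (\ell+1)/2$; a direct count of positive integers in this range produces exactly $\lfloor \ell/2 \rfloor$ such indices, whether $\ell$ is even or odd.

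Combining the two ingredients, for $i = 1, \ldots, \lfloor \ell/2 \rfloor$ the eigenvalue $q_i(G')$ comes from the $P_{\ell+1}$ block (since it is strictly positive and therefore cannot be one of the appended zeros), and consequently $q_i(G) \ge q_i(G') = q_i(P_{\ell+1}) > 2$. This produces $\lfloor \ell/2 \rfloor$ signless Laplacian eigenvalues of $G$ in $(2, 2n-2]$, giving the claimed bound $m_G(2, 2n-2] \ge \lfloor \ell/2 \rfloor$.

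No substantive obstacle arises: the argument is essentially ``delete to a path, then quote the path spectrum.'' The only points deserving care are (a) verifying that iterated use of Lemma \ref{addedges} really propagates the inequality $q_i(G) \ge q_i(G')$ through every edge reinsertion, and (b) matching indices correctly when extracting $q_i(G')$ from the union of $\sigma_Q(P_{\ell+1})$ with a multiset of zeros, so that the top $\lfloor \ell/2 \rfloor$ eigenvalues of $G'$ are indeed the positive eigenvalues of $P_{\ell+1}$ that exceed $2$.
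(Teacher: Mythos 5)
Your proof is correct and follows essentially the same route as the paper: pass to the spanning subgraph $P_{\ell+1}\cup(n-\ell-1)K_1$ determined by a longest path, apply the edge-deletion interlacing of Lemma \ref{addedges} to get $q_i(G)\ge q_i(G')$, and count the eigenvalues of the path exceeding $2$. The only difference is that you compute the path spectrum explicitly (and thereby avoid the paper's tree/non-tree case split), whereas the paper cites the Grone--Merris--Sunder result for this count.
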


\begin{proof}
Let $P$ be a longest path of $G$. Evidently, $P$ is a tree. By \cite[Corollary 4.3]{GMS}, $m_P(2, 2\ell]=m_P(2,\ell+1]\ge \lfloor \frac{\ell}{2}\rfloor$.
Let
$H$ be the spanning subgraph of $G$ with edge set $E(P)$. By Lemma \ref{addedges},
$m_G(2, 2n-2]\ge m_H(2, 2n-2]=m_P(2,2\ell]$, so the result follows.
\end{proof}

The main result of this section is the following theorem.

\begin{theorem}\label{diameter-x}
Let $G$ be  an $n$-vertex connected  graph with diameter $d$. Then  $m_G[0,n-2)\ge d-1$. Moreover, if
$3\le d\le n-3$, then
\[
m_G[0,n-d+1)\ge \begin{cases}
d & \mbox{if $3\le d\le n-5$,}\\
d-1 & \mbox{otherwise.}
\end{cases}
\]
\end{theorem}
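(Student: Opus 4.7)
I would base the proof on a diametral path $P = v_1v_2\cdots v_{d+1}$ in $G$ and the two structural facts that diameter $d$ enforces: (i) $G[V(P)] \cong P_{d+1}$, since a chord $v_iv_j$ with $j \ge i+2$ would yield a $v_1$--$v_{d+1}$ walk of length $d - (j-i) + 1 < d$; (ii) for every $u \in V(G) \setminus V(P)$, the path-neighbours $N_P(u)$ lie in some window $\{v_{t_u-1}, v_{t_u}, v_{t_u+1}\}$ of three consecutive path vertices, for the analogous reason applied to two path-neighbours of $u$. These are precisely the structural features shared with the extremal family $G_{n,d,t,a}$ treated in Lemma \ref{gndr}, which is what makes the reduction to the earlier structural lemmas possible.

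For the first inequality $m_G[0, n-2) \ge d-1$, I would split on $d$. The case $d=1$ is vacuous. For $d=2$, I use that $q_n(G) < \delta(G)$ whenever $G$ has no isolated vertex: the Rayleigh quotient at $e_v$ equals $\delta(v)$, but $e_v$ cannot be an eigenvector of $Q(G)$ (since $Q(G)e_v = \delta(v)e_v + \sum_{w\sim v}e_w$ has a non-zero component on $N(v) \ne \emptyset$), so the inequality is strict; combined with $\delta(G) \le n-2$ when $G \ne K_n$, this gives $q_n(G) < n-2$. For $3 \le d \le n-3$, the first inequality follows from the second since $n-d+1 \le n-2$. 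For $d \in \{n-2, n-1\}$, at most one vertex lies outside $V(P)$, so applying Lemma \ref{deletevertices} at most once together with $q_1(P_{d+1}) < 4$ from Lemma \ref{Cn}(ii) gives $q_j(G) < 5 \le n-2$ for all $j \ge 2$ whenever $n \ge 7$, with the small-$n$ cases checked directly from $\sigma_Q(P_n)$.

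For the second inequality my plan follows the edge-deletion template of Lemma \ref{gndr}. In the principal case where a window index $t$ can be chosen with $t_u \in \{t, t+1\}$ for every $u \in V(G) \setminus V(P)$, I delete the two path-edges $v_{t-2}v_{t-1}$ and $v_{t+2}v_{t+3}$ to produce a spanning subgraph $H = P_{t-2} \cup H_0 \cup P_{d-t-1}$ in which $H_0$ is a spanning subgraph of some $G_{n-d+3, 3, 2, a}$. By Lemma \ref{Cn}(v), $G_{n-d+3, 3, 2, a}$ has $n-d+2$ eigenvalues in $[0, n-d+1)$, so by monotonicity $m_{H_0}[0, n-d+1) \ge n-d+2$; the two path pieces contribute their full $t-2$ and $d-t-1$ eigenvalues (all strictly less than $4 \le n-d+1$), giving $m_H[0, n-d+1) \ge n-1$. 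Two applications of Lemma \ref{addedges} then yield $q_{n-d+1}(G) \le q_{n-d-1}(H) < n-d+1$, which is the desired conclusion. The boundary bound $d-1$ for $d \in \{n-4, n-3\}$ arises from substituting Lemma \ref{Cn}(iii)--(iv) or Lemma \ref{gndra} for Lemma \ref{Cn}(v), which govern the smaller central pieces obtained in those ranges.

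The main obstacle I anticipate is the case where the set $\{t_u : u \in V(G) \setminus V(P)\}$ does not fit inside any two-element window $\{t, t+1\}$, so no single $t$ makes $H_0$ a subgraph of some $G_{n-d+3, 3, 2, a}$. My remedy is to choose $t$ minimising the number of $U$-to-$V(P)$ edges whose path-end lies outside $\{v_{t-1}, v_t, v_{t+1}, v_{t+2}\}$ and to delete those $k'$ extra edges as well; the additional deletions shift the index in Lemma \ref{addedges} by $k'$, but simultaneously move further non-path vertices into the left or right pieces (or isolate them entirely), contributing further small eigenvalues. The delicate step will be verifying that the total count $m_H[0, n-d+1)$ rises by at least $k'$ so that the index shift is absorbed---this requires controlling the spectra of the modified left and right pieces using the monotonicity $G[U] \subseteq K_{|U|}$ together with $q_1(P_\ell) < 4$.
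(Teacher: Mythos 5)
Your structural setup (the induced diametral path and the three-vertex attachment windows) and your treatment of the concentrated case do match the paper's mechanism: cut the path on either side of the window and compare the middle piece with $G_{n-d+3,3,2,a}$ via Lemmas \ref{addedges} and \ref{Cn}(v) (with Lemma \ref{Cn}(iii)--(iv) and Lemma \ref{gndra} taking over near $d=n-3,n-4$). Your alternative derivation of $m_G[0,n-2)\ge d-1$ (via $q_n(G)<\delta(G)$ for $d=2$ and vertex deletion for $d\ge n-2$) is also sound, though the paper gets this uniformly from $q_i(G)\le n-3$ for $i\ge \delta(G)+2$ together with $\delta(G)\le\delta_G(v_1)\le n-d$.

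The genuine gap is the spread-out case, which you correctly flag as the obstacle but whose proposed remedy cannot be made to work. If you delete $k$ edges in total, Lemma \ref{addedges} gives $q_{n-d+1}(G)\le q_{n-d+1-k}(G-S)$, so you need $m_{G-S}[0,n-d+1)\ge d+k$; since this count is at most $n$, the method fails outright once $k>n-d$. But with $n-d-1$ outside vertices attached at widely separated windows, every choice of $t$ leaves up to $n-d-2$ stray vertices, each contributing up to three stray path-edges, so $k$ can be of order $3(n-d)$ --- far beyond the budget. The paper avoids this by an entirely different mechanism: it takes the principal submatrix $B$ of $Q(G)$ indexed by $V(P)$, writes $B=Q(P)+M$ with $M$ diagonal ($M_{zz}=\delta_G(z)-\delta_P(z)\le n-d-1$), and combines Cauchy interlacing ($q_{n-d+1}(G)\le\rho_2(B)$) with Weyl's inequality ($\rho_2(B)\le q_1(P)+\rho_2(M)<4+\rho_2(M)$). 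Setting $W=\{z\in V(P):\delta_G(z)-\delta_P(z)\ge n-d-2\}$, either $|W|\le 1$, in which case $\rho_2(M)\le n-d-3$ and the bound follows at once --- and this is precisely your spread-out configuration, where no path vertex can carry many outside neighbours --- or $|W|\ge 2$, in which case a counting argument forces the heavy path vertices to lie within distance two of each other and to share at least $n-d-3$ common outside neighbours, which is what licenses the structural reduction to $G_{n-2,d,\alpha+1}$ or $G_{n-2,d,\alpha,c}$ (reached by deleting the two leftover \emph{vertices} via Lemma \ref{deletevertices}, not by deleting edges). This dichotomy on $W$, with the diagonal-perturbation bound handling the diffuse side, is the missing idea; without it, or a genuine substitute, your argument does not cover graphs whose outside vertices attach all along the path.
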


\begin{proof} The case when $d=1$ is trivial. Suppose that $d\ge 2$.
Let $P:=v_1\dots v_{d+1}$ be a diametral path of $G$. Then
\[
\delta(G)\le \delta_G(v_1)\le n-d.
\]
By \cite[Theorem 4]{GH}, $q_i(G)\le n-3$ whenever $\delta(G)+2\le i\le n-1$, so
$q_{n-d+2}(G)<n-2$, implying that $m_G[0,n-2)\ge d-1$.

Suppose that $3\le d\le n-3$.

\noindent
{\bf Case 1.} $d=n-3$.

It suffices to show $q_{5}(G)< 4$.

There are exactly two vertices, say $u$ and $v$  outside $P$ in $G$. As $P$ is a diametral path, $u$ ($v$, respectively) is adjacent to at most three consecutive vertices on $P$, and  $\max\{\delta_{G,P}(u), \delta_{G,P}(v)\}\ge 1$.
If $|N_{G,P}(w)|<3$ with $w\in \{u,v\}$, then, by Lemma \ref{addedges}, we may add edges between $w$ and proper vertices on $P$ so that $u$ ($v$, respectively) is adjacent to exactly three consecutive vertices on $P$ and $P$ remains to be a diametral path of the resulted graph.
Assume that $N_{G,P}(u)=\{v_{r-1},v_r,v_{r+1}\}$ and $N_{G,P}(v)=\{v_{s-1},v_s,v_{s+1}\}$ whether $u$ and $v$ are adjacent or not, where $2\le r,s\le d$. Assume that $r\le s$.
If $s-r=0$, then $G$ is a spanning subgraph of $G_{n,n-3,r}$, so we have $q_5(G)\le q_{5}(G_{n,n-3,r})<4$ by Lemmas \ref{addedges} and \ref{gndt}.
If $s-r=1$, then $G$ is a spanning subgraph of $G_{n,n-3,r,1}$, so we have $q_5(G)<4$ by Lemmas \ref{addedges} and \ref{gndra}.
Suppose that $s-r\ge 2$. Let $u_i=v_i$ for $i=1,\dots, r-1$, $u_r=u$, $u_{i+1}=v_i$ for $i=r,\dots,s-1$, $u_{s+1}=v$, and $u_{i+2}=v_i$ for $i=s,\dots,n-2$.
Then $G$ is the graph obtained from $P_n=u_1\dots u_n$ by adding edges $u_{r-1}u_{r+1}$, $u_ru_{r+2}$, $u_su_{s+2}$ and $u_{s+1}u_{s+3}$. Thus, by Lemmas \ref{addedges} and \ref{Cn},
\[
q_5(G)\le q_1(G-u_{r-1}u_{r+1}-u_ru_{r+2}-u_su_{s+2}-u_{s+1}u_{s+3})=q_1(P_n)<4.
\]

\noindent
{\bf Case 2.}  $d\le n-4$.

Let $B$ be the principal submatrix of $Q(G)$ corresponding to vertices of $P$.
Then $B=Q(P)+M$, where $M$ is a diagonal matrix in which the diagonal entry corresponding to vertex $z\in V(P)$ is $\delta_G(z)-\delta_{G,P}(z)$.
As there are exactly $n-d-1$ vertices outside $P$, $\delta_G(z)-\delta_{G,P}(z)\le n-d-1$.

Let $W=\{z\in V(P):\delta_G(z)-\delta_{G,P}(z)\ge n-d-2 \}$.
Let $\alpha=\min\{i: v_i\in W\}$ and $\beta=\max\{i:v_i\in W\}$. Let $N_\alpha=N_G(v_\alpha)\setminus V(P)$ and $N_\beta=N_G(v_\beta)\setminus V(P)$.

\noindent
{\bf Claim.} If $|W|\ge 2$, then  $\beta-\alpha\le 2$ and
$|N_\alpha\cap N_\beta|\ge n-d-3$.

Suppose that $|W|\ge 2$.
If $\beta-\alpha\ge 3$, then, as $P$ is a diametral path of $G$, one gets $N_G(v_\alpha)\cap N_G(v_\beta)=\emptyset$, so \[
2(n-d-2)\le (\delta_G(v_\alpha)-\delta_{G,P}(v_\alpha))+(\delta_G(v_\beta)-\delta_{G,P}(v_\beta))\le n-d-1,
\]
contradicting the fact that $d\le n-4$.
Thus $\beta-\alpha\le 2$.  As
 $|N_\alpha\cup N_\beta|\le |V(G)\setminus V(P)|=n-d-1$, one gets
\begin{align*}
|N_\alpha\cap N_\beta|&=|N_\alpha|+|N_\beta|-|N_\alpha\cup N_\beta|\\
&\ge 2(n-d-2)-(n-d-1)=n-d-3.
\end{align*}
This proves the claim.

\noindent
{\bf Case 2.1.}  $d=n-4$.

We want to show that $q_{6}(G)< 5$.

If $|W|\le 2$, then  $\rho_3(M)\le 1$, so by Lemmas \ref{interlacing}, \ref{cw} and \ref{Cn}, we have \[
q_{6}(G)=q_{n-(n-3)+3}(G)\le \rho_3(B)\le q_1(P)+\rho_3(M)<4+1=5.
\]

Suppose that $|W|\ge 3$. By the above claim, $\beta-\alpha\le 2$ and $|N_\alpha\cap N_\beta|\ge 1$.
It then follows that $\beta-\alpha=2$, i.e., $W=\{v_\alpha,v_{\alpha+1},v_\beta\}$.

If $|N_\alpha\cap N_\beta|\ge 2$, say $z_1,z_2\in N_{\alpha}\cap N_{\beta}$, then $H_1:=G[V(P)\cup \{z_1,z_2\}]$ is isomorphic to a spanning subgraph of $G_{n-1,n-4,\alpha+1}$.
Denote by $w$ the unique vertex in $V(G)\setminus V(H_1)$.
We have by Lemmas \ref{deletevertices}, \ref{addedges} and \ref{gndt} that
\[
q_6(G)\le q_{5}(G-w)+1=q_{5}(H_1)+1\le q_5(G_{n-1,n-4,\alpha+1})+1<4+1=5.
\]

Suppose that $|N_\alpha\cap N_\beta|=1$. Assume that $N_\alpha\cap N_\beta\subset N_G(v_{\alpha+1})$ by Lemma \ref{addedges}.
As $|V(G)\setminus V(P)|=3$, we have $|N_\alpha\cap N_{\alpha+1}|= 2$ or $|N_{\beta}\cap N_{\alpha+1}|= 2$, where $N_{\alpha+1}=N_G(v_{\alpha+1})\setminus V(P)$. Assume that $|N_\alpha\cap N_{\alpha+1}|= 2$. Denote by $u$ the unique vertex in $N_\alpha\cap N_{\alpha+1}$ that is not a neighbor of $v_\beta$.
It is possible that $u$ is adjacent to $v_{\alpha-1}$. Assume that $uv_{\alpha-1}\in E(G)$ by Lemma \ref{addedges}. So $H_2:=G[V(P)\cup (N_\alpha\cap N_{\alpha+1})]$ is isomorphic to a spanning subgraph of  $G_{n-1,n-4,\alpha,1}$.
Denote by $u'$ the unique vertex in $V(G)\setminus V(H_2)$.
It then follows from Lemmas \ref{deletevertices}, \ref{addedges} and \ref{gndra} that \[
q_6(G)\le q_5(G-u')+1= q_5(H_2)+1\le q_5(G_{n-1,n-4,\alpha,1})+1<4+1=5.
\]

\noindent
{\bf Case 2.2.}  $d\le n-5$.

We want to show that $q_{n-d+1}(G)<n-d+1$.

If $|W|\le 1$,
then  $\rho_2(M)\le n-d-3$, so we have by Lemmas \ref{interlacing}, \ref{cw} and \ref{Cn} that
\[
q_{n-d+1}(G)=q_{n-(d+1)+2}(G)\le \rho_2(B)\le q_1(P)+\rho_2(M)<4+n-d-3=n-d+1.
\]

Suppose that $|W|\ge 2$. By the above claim, $\beta-\alpha\le 2$ and $|N_\alpha\cap N_\beta|\ge n-d-3$.

Suppose that $\beta-\alpha=2$ or $\beta-\alpha=1$ with $N_\alpha\cap N_G(v_{\beta+1})= \emptyset$ and $N_\beta\cap N_G(v_{\alpha-1})=\emptyset$.
Then $G$ contains a subgraph $H$ induced by the set of vertices on $P$ together with $n-d-3$ vertices in $N_\alpha\cap N_\beta$ that is  isomorphic to a spanning subgraph of $G_{n-2,d,\alpha+1}$.  By Lemmas \ref{addedges} and \ref{gndt}, $q_{n-d-1}(H)\le q_{n-d-1}(G_{n-2,d,\alpha+1})<n-d-1$.
Let $w_1$ and $w_2$ be the  vertices in $V(G)\setminus  V(H)$.
Then, by Lemma \ref{deletevertices}, we have
 \[
q_{n-d+1}(G)\le q_{n-d-1}(G-w_1-w_2)+2=q_{n-d-1}(H)+2<n-d-1+2=n-d+1.
\]

Suppose next that $\beta-\alpha=1$ with $N_\alpha\cap N_G(v_{\beta+1})\ne \emptyset$ or $N_\beta\cap N_G(v_{\alpha-1})\ne \emptyset$, say $|N_\beta\cap N_G(v_{\alpha-1})|=c>0$. Then $G$ contains a subgraph $H_1$ induced by the set containing vertices on $P$, all vertices in $N_\beta\cap N_G(v_{\alpha-1})$  and $n-d-3-c$ vertices in $N_\alpha\cap N_\beta$ that is
isomorphic to a spanning subgraph of $G_{n-2,d,\alpha,c}$. Let $z_1$ and $z_1'$ be the  two vertices in $V(G)\setminus V(H_1)$. By Lemmas \ref{deletevertices}, \ref{addedges} and \ref{gndr}, we have
\begin{align*}
q_{n-d+1}(G)&\le q_{n-d-1}(G-z_1-z_1')+2\\
&=q_{n-d-1}(H_1)+2\\
&\le q_{n-d-1}(G_{n-2,d,\alpha,c})+2\\
&<n-d-1+2\\
&=n-d+1.
\end{align*}
This completes the proof.
\end{proof}

The bound in Theorem \ref{diameter-x} for $d=1,2$ is tight.
For $d=1$, $\sigma_Q(K_n)=\{2n-2,(n-2)^{[n-2]}\}$, so $m_{K_n}[0,n-2)=0$.
For $d=2$, $K_n-e$ is an $n$-vertex graph with diameter $2$ and $m_{K_n-e}[0,n-2)=1$  by Lemma \ref{Cn}.
The restriction on $d$ in
the second part can not be weakened as it is not true for an $n$-vertex graph $G$ with diameter $d\ge n-2$. This is because $G\cong P_n$ and $m_{P_n}[0,2)<n-2=d-1$ for $n\ge 6$ if $d=n-1$, and $G$ contains $P_{n-1}$ as an induced subgraph and $m_G[0,2)\le m_{P_{n-1}}[0,2)<n-3=d-1$ for $n\ge 9$ if $d=n-2$.

By Theorem \ref{diameter-x}, if $d\le n-5$, then $m_{G_{n,d,t}}[0,n-d+1)\ge d$. However, if $3\le t\le d-1$, then $G_{n,d,t}$ has exactly $d-1$ Laplacian eigenvalues in the interval $[0,n-d+1)$, see \cite{XZ}.

The first part of Theorem \ref{diameter-x} can be improved for graphs with diameter three.

\begin{theorem}\label{dia3}
Let $G$ be an $n$-vertex connected graph with diameter $3$, where $n\ge 7$. Then $m_G[0,n-3)\ge 2$. Equality holds if $G\cong G_{n,3,2}$ or $G\cong G_{n,3,2,a}$ with $1\le a\le n-5$.
\end{theorem}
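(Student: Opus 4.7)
The plan is to realize $G$ as a spanning subgraph of one of the extremal graphs and then transfer the bound on $q_{n-1}$ via Lemma \ref{addedges}. Fix a diametral path $P = v_1v_2v_3v_4$ and set $R = V(G)\setminus V(P)$. Since $d(v_1,v_4)=3$, the induced graph $G[V(P)]$ equals $P_4$, and for each $u \in R$ one has $N_P(u) \subseteq \{v_1,v_2,v_3\}$ or $N_P(u) \subseteq \{v_2,v_3,v_4\}$. Letting $L = \{u \in R : v_4 \notin N_P(u)\}$ and $a = |L|$, $G$ is a spanning subgraph of $G_{n,3,2,a}$ when $1 \le a \le n-5$; in the extreme cases $a \in \{0, n-4\}$, $G$ is a spanning subgraph of $G_{n,3,3}$ or $G_{n,3,2}$ respectively, and $G_{n,3,3} \cong G_{n,3,2}$ via reversing the path.

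When $1 \le a \le n-5$, Lemma \ref{Cn}(v) gives $q_{n-1}(G_{n,3,2,a}) < n-3$, so Lemma \ref{addedges} delivers $q_{n-1}(G) < n-3$; combined with $q_n(G) \ge 0$, this yields $m_G[0,n-3) \ge 2$. The residual case is to establish $q_{n-1}(G_{n,3,2}) < n-3$.

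For the residual case I would take the equitable partition $\{v_1\}\cup(\{v_2\}\cup R)\cup\{v_3\}\cup\{v_4\}$ of $V(G_{n,3,2})$; every vertex in $\{v_2\}\cup R$ has exactly one neighbor in each of $\{v_1\}$ and $\{v_3\}$, none in $\{v_4\}$, and $n-4$ inside the block, so the partition is equitable with quotient matrix
\[
M = \begin{pmatrix}
n-3 & n-3 & 0 & 0\\
1 & 2(n-3) & 1 & 0\\
0 & n-3 & n-2 & 1\\
0 & 0 & 1 & 1
\end{pmatrix}.
\]
Vectors supported on $\{v_2\}\cup R$ summing to zero form an $(n-4)$-dimensional eigenspace for $n-3$, so by Lemma \ref{quo}, $\sigma_Q(G_{n,3,2}) = \sigma(M)\cup\{n-3\}^{[n-4]}$. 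I would then evaluate $\det(xI-M)$ at $x\in\{0,1,n-3,n-2\}$, obtaining $2(n-3)^2(n-4) > 0$, $-2n^2+16n-31 < 0$ (for $n \ge 7$), $(n-3)^2 > 0$, and $-(n-4)^2 < 0$ respectively. The intermediate value theorem then places one root of $\det(xI-M)$ in each of $(0,1)$, $(1,n-3)$, $(n-3,n-2)$, $(n-2,\infty)$; two of the four eigenvalues of $M$ lie strictly below $n-3$, so $q_{n-1}(G_{n,3,2}) < n-3$.

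The equality claims follow at once: Lemma \ref{Cn}(v) exhibits exactly two eigenvalues of $G_{n,3,2,a}$ below $n-3$ when $1 \le a \le n-5$, and the quotient-matrix argument gives exactly two eigenvalues of $G_{n,3,2}$ below $n-3$. The main obstacle is the four sign evaluations of $\det(xI - M)$, in particular the closed-form identities $\det((n-3)I - M) = (n-3)^2$ and $\det((n-2)I - M) = -(n-4)^2$ which guarantee the clean interlacing of the roots relative to $n-3$; everything else is bookkeeping.
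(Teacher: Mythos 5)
Your proposal is correct and follows essentially the same route as the paper: reduce to the extremal graphs via Lemma \ref{addedges} and Lemma \ref{Cn}(v), then analyze $G_{n,3,2}$ with exactly the same equitable partition and quotient matrix, locating the roots of the characteristic polynomial relative to $n-3$. The only (immaterial) differences are that you evaluate the characteristic polynomial at $0$ and $1$ where the paper uses the min--max principle for $\rho_1,\rho_4$ and evaluates at $2$, and that you justify the multiplicity-$(n-4)$ eigenvalue $n-3$ via zero-sum vectors on the big block rather than via equal rows of $(n-3)I_n-Q$.
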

\begin{proof}
As $G$ is a spanning subgraph of $G_{n,3,2}$ or $G_{n,3,2,a}$, and $q_{n-1}(G_{n,3,2,a})<n-3=q_{n-2}(G_{n,3,2,a})$ by Lemma \ref{Cn}, it suffices to show that $q_{n-1}(G_{n,3,2})<n-3=q_{n-2}(G_{n,3,2})$.

Let $H=G_{n,3,2}$.
As $(n-3)I_n-Q(H)$ has $n-3$ equal rows, $n-3$ is an eigenvalue of $Q(H)$ with multiplicity at least $n-4$. Let $v_1\dots v_4$ be a diametral path of $H$ so that $\delta_H(v_4)=1$.
With respect to the  partition $V(H)=\{v_1\}\cup N_H(v_1)\cup \{v_3\}\cup \{v_4\}$, $Q(H)$ has an equitable quotient matrix $B$ with \[
B=\begin{pmatrix}
n-3&n-3&0&0\\
1&2n-6&1&0\\
0&n-3&n-2&1\\
0&0&1&1
\end{pmatrix}.
\]
Let $f(x)=\det(xI_4-B)$. As \[
f(n-2)=-(n-4)^2<0,
\]
\[
f(n-3)=(n-3)^2>0
\]
and \[
f(2)=-2(n-3)(n-4)(n-6)<0,
\]
we have $\rho_4<2<\rho_3<n-3<\rho_2<n-2<\rho_1$, where $\rho_i=\rho_i(B)$ for $i=1,\dots, 4$.
By Lemma \ref{quo}, $\sigma_Q(H)=\{\rho_1,\rho_2,(n-3)^{[n-4]}, \rho_3,\rho_4\}$. So $q_{n-2}(H)=n-3$ and $q_{n-1}(H)=\rho_3<n-3$.
\end{proof}

\bigskip
\bigskip

\noindent {\bf Acknowledgement.}
This work was supported by the National Natural Science Foundation of China (No.~12071158).

\end{document}